\newcommand{\eps}{\varepsilon}
\newcommand{\Lip}{{\mathrm{Lip}}}
\newcommand\R{{\mathbb R}}
\newcommand\cD{{\mathcal D}}
\newcommand\cL{{\mathcal L}}
\newcommand\bfC{{\mathbf C}}
\newcommand{\tcD}{{\widetilde{\cD}}}
\newcommand{\tA}{{\tilde A}}
\newcommand{\ta}{{\tilde a}}
\newcommand{\tf}{{\tilde f}}
\newcommand{\tgamma}{{\tilde{\gamma}}}
\numberwithin{equation}{section}
\newtheorem{thm}{Theorem}[section]
\newtheorem{prop}[thm]{Proposition}
\newtheorem{cor}[thm]{Corollary}
\newtheorem{lem}[thm]{Lemma}
\theoremstyle{remark}
\newtheorem{rmk}[thm]{Remark}
\begin{document}

\title{Statistical aspects of mean field coupled intermittent maps}
\author{Wael Bahsoun}
\address{Department of Mathematical Sciences, Loughborough University,
Loughborough, Leicestershire, LE11 3TU, UK}
\email{$\dagger$ W.Bahsoun@lboro.ac.uk, $\ddagger$ a.korepanov@lboro.ac.uk}
\author{Alexey Korepanov}
\subjclass{Primary 37A05, 37E05}
\date{23 June 2023}
\thanks{The research of both authors is supported by EPSRC grant EP/V053493/1.
A.K.~is thankful to Nicholas Fleming-V\'azquez for helpful discussions and advice.
}
\keywords{Mean field coupling, Intermittent maps}
\maketitle

\begin{abstract}
    We study infinite systems of mean field weakly coupled intermittent maps
    in the Pomeau-Manneville scenario.
    We prove that the coupled system admits a unique ``physical'' stationary state,
    to which all absolutely continuous states converge.
    Moreover, we show that suitably regular states converge polynomially.
\end{abstract}

\section{Introduction}

Mean field coupled dynamics can be thought of as a dynamical system
with $n$ ``particles'' with states $x_1, \ldots, x_n$
evolving according to an equation of the type
\[
    x_k \mapsto T \Bigl( x_k, \eps \frac{\delta_{x_1} + \cdots + \delta_{x_n}}{n} \Bigr)
    .
\]
Here $T$ is some transformation, $\eps \in \R$ is the strength of coupling
and $\delta_{x_k}$ are the delta functions, so $(\delta_{x_1} + \cdots + \delta_{x_n}) / n$
is a probability measure describing the ``mean state'' of the system.

As $n \to \infty$, it is natural to consider
the evolution of the distribution of particles:
if $\mu$ is a probability measure describing distribution of particles, then
one looks at the operator that maps $\mu$ to the distribution of
$T(x, \eps \mu)$, where $x \sim \mu$ is random.

In chaotic dynamics, mean field coupled systems have been studied first when $T$ is
a perturbation of a uniformly expanding circle map by Keller~\cite{K00} and followed,
among others, by B\'alint, Keller, S\'elley and T\'oth~\cite{BKST18},
Blank~\cite{B11}, Galatolo~\cite{G21}, S\'elley and Tanzi~\cite{ST21}.
The case when $T$ is a perturbation of an Anosov diffeomorphism has been
covered by Bahsoun, Liverani and S\'elley~\cite{BLS22} (see in particular~\cite[Section~2.2]{BLS22}
for a motivation of such study).
See Galatolo~\cite{G21} for a general framework when the site dynamics admits exponential
decay of correlations.
The results of~\cite{G21} also apply to certain mean field coupled random systems.
We refer the reader to Tanzi~\cite{T22} for a recent review on the topic and to~\cite{BLS22}
for connections with classical and important partial differential equations.

In this work we consider the situation where $T$ is a perturbation of the prototypical
chaotic map with \emph{non-uniform} expansion and polynomial decay of correlations:
the intermittent map on the unit interval $[0,1]$ in the Pomeau-Manneville scenario~\cite{PM80}.
We restrict to the case when the coupling is weak, i.e.\ $\eps$ is small.

Our results apply to a wide class of intermittent systems satisfying standard assumptions
(see Section~\ref{sec:main}).
To keep the introduction simple, here we consider a very concrete example.

Fix $\gamma_* \in (0,1)$ and let, for $\eps \in \R$, $h \in L^1[0,1]$ and $x \in [0,1]$,
\begin{equation}
    \label{eq:Teh}
    T_{\eps h} (x) = x (1 + x^{\gamma_* + \eps \gamma_h}) + \eps \varphi_h(x) \mod 1,
\end{equation}
where
\[
    \gamma_h = \int_0^1 h(s) \sin (2 \pi s) \, ds
    \quad \text{and} \quad
    \varphi_h(x) = x^2 (1-x) \int_0^1 h(s) \cos (2 \pi s) \, ds
    .
\]
This way, $T_{\eps h}$ is a perturbation of the intermittent map $x \mapsto x (1 + x^{\gamma_*}) \mod 1$.
Informally, $\gamma_h$ changes the degree of the indifferent point at $0$, and $\varphi_h$
is responsible for perturbations away from $0$.

We restrict to $\eps \in [-\eps_0, \eps_0]$ with $\eps_0$ small
and to $h$ nonnegative with $\int_0^1 h(x) \, dx = 1$
(i.e.\ $h$ is a probability density).

Let $\cL_{\eps h} \colon L^1[0,1] \to L^1[0,1]$ be the transfer operator for $T_{\eps h}$:
\begin{equation}
    \label{eq:tr}
    (\cL_{\eps h} g) (x)
    = \sum_{y \in T_{\eps h}^{-1}(x)} \frac{g(y)}{T_{\eps h}'(y)}
    ,
\end{equation}
and let
\begin{equation} \label{eq:selfcL}
    \cL_{\eps} h = \cL_{\eps h} h
    .
\end{equation}
We call $\cL_\eps$ the \emph{self-consistent} transfer operator.
Observe that $\cL_{\eps}$ is nonlinear, and that $\cL_\eps h$ is the density
of the distribution of $T_{\eps h} (x)$, if $x$ is distributed according
to the probability measure with density $h$.

We prove that for sufficiently small $\eps_0$,
the self-consistent transfer operator $\cL_\eps$ admits a unique \emph{physical}
(see~\cite[Definition~2.1]{BLS22}) invariant state $h_\eps$, and that
$\cL_\eps^n h$ converges to $h_\eps$ in $L^1$ polynomially for all sufficiently regular $h$:

\begin{thm}
    \label{thm:example-ac}
    There exists $\eps_0 \in (0, 1 - \gamma_*)$ so that
    each $\cL_\eps$ with $\eps \in [-\eps_0, \eps_0]$, as an operator on probability densities,
    has a unique fixed point $h_\eps$. For every probability density $h$,
    \[
        \lim_{n \to \infty} \| \cL_\eps^n h - h_\eps \|_{L^1}
        = 0
        .
    \]
    Moreover, $h_\eps \in C^\infty (0,1]$ and there are $A, a_1, a_2, \ldots > 0$ such that
    for all $\ell \geq 1$ and $x \in (0,1]$,
    \begin{equation}
        \label{eq:Aaaa}
        \int_0^x h_\eps(s) \, ds
        \leq A x^{1 - 1 / (\gamma_* + \eps_0)}
        \quad \text{and} \quad
        \frac{|h_\eps^{(\ell)}(x)|}{h_\eps(x)}
        \leq \frac{a_\ell}{x^\ell}
        .
    \end{equation}
\end{thm}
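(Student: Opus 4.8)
The plan is to deduce the first assertions from the general results of Section~\ref{sec:main}, and to obtain the smoothness and the bounds \eqref{eq:Aaaa} directly from the fixed-point equation. First I would check that the family $\{T_{\eps h}:|\eps|\le\eps_0,\ h\text{ a probability density}\}$ fits the standing assumptions of Section~\ref{sec:main} once $\eps_0<1-\gamma_*$ is small enough. The unperturbed map $x\mapsto x(1+x^{\gamma_*})\bmod 1$ has two full, increasing branches, and this structure — together with uniform expansion away from $0$ and uniform distortion bounds of all orders — persists under the perturbation because every $T_{\eps h}$ is $C^\infty$-close to it. Near the neutral fixed point $T_{\eps h}(x)=x+x^{1+\gamma_*+\eps\gamma_h}+\eps\varphi_h(x)$ with $\eps\varphi_h(x)=O(x^2)$ negligible compared with $x^{1+\gamma_*+\eps\gamma_h}$ (since $\gamma_*+\eps_0<1$), so the degree $\gamma_*+\eps\gamma_h$ of the indifferent point stays in a fixed compact subinterval of $(0,1)$, using $|\gamma_h|\le\int_0^1 h=1$. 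Finally one needs the continuity of $(\eps,h)\mapsto T_{\eps h}$ in whatever strong topology Section~\ref{sec:main} requires: the key simplification here is that $h$ enters $T_{\eps h}$ only through the two \emph{bounded linear} functionals $h\mapsto\int_0^1 h\sin(2\pi s)\,ds$ and $h\mapsto\int_0^1 h\cos(2\pi s)\,ds$, tested against $C^\infty$ functions, so $h\mapsto T_{\eps h}$ is affine and $\|\cL_{\eps h}-\cL_{\eps h'}\|\lesssim\eps\,\|h-h'\|$ in every relevant norm, with no derivative loss.

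Granting these verifications, the general theorem of Section~\ref{sec:main} provides $\eps_0$ and, for $|\eps|\le\eps_0$, the unique physical fixed point $h_\eps$ of $\cL_\eps$, the convergence $\|\cL_\eps^n h-h_\eps\|_{L^1}\to 0$ for every probability density $h$, the polynomial rate on the regular class, and the tail estimate in \eqref{eq:Aaaa} — the latter being the uniform Liverani--Saussol--Vaienti tail bound, with exponent governed by the largest neutral-point degree $\gamma_*+\eps_0$ occurring in the family.

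It remains to prove $h_\eps\in C^\infty(0,1]$ and the derivative bounds. Since $\cL_{\eps h_\eps}h_\eps=h_\eps$, the density $h_\eps$ is the invariant density of the single map $T_*:=T_{\eps h_\eps}$, which is $C^\infty$ on $(0,1]$, has two full branches, and is uniformly expanding away from $0$. Qualitative smoothness on $(0,1]$ is the standard bootstrap for Pomeau--Manneville maps: one recovers $h_\eps$ from the invariant density of the first-return (Gibbs--Markov) map on a subinterval bounded away from $0$ — real-analytic by a cone-of-analytic-functions argument — through a sum of smooth pull-backs along the tower that converges, together with all its derivatives, uniformly on compact subsets of $(0,1]$. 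For the scale-invariant bounds $|h_\eps^{(\ell)}(x)|\le a_\ell x^{-\ell}h_\eps(x)$ I would argue by induction on $\ell$ that $\cL_{T_*}$ preserves the cone
\[
    \cC_\ell=\bigl\{\, g\in C^\ell(0,1]\ :\ g>0,\ |g^{(j)}(x)|\le a_j\,x^{-j}g(x)\ \text{for }1\le j\le\ell,\text{ and the standard LSV cone conditions hold}\,\bigr\}
\]
for a suitable increasing sequence $a_1\le a_2\le\cdots$ depending only on the distortion data of $T_*$; then $h_\eps\in\bigcap_\ell\cC_\ell$ gives \eqref{eq:Aaaa}. Differentiating $(\cL_{T_*}g)(x)=\sum_{T_*(y)=x}g(y)/T_*'(y)$ and expanding by Fa\`a di Bruno, one controls the two branch contributions separately: on the right branch $T_*'\ge\lambda>1$ and the preimage stays bounded away from $0$, so each term is dominated by $a_\ell x^{-\ell}(\cL_{T_*}g)(x)$ once $a_\ell$ is large; on the left branch, with $y=T_{*,\mathrm{left}}^{-1}(x)$ one has $x/y=1+y^{\gamma_*+\eps\gamma_{h_\eps}}+O(y)$ and $T_*'(y)=1+(1+\gamma_*+\eps\gamma_{h_\eps})\,y^{\gamma_*+\eps\gamma_{h_\eps}}+O(y)$, whence the ratios $x\,(y\,T_*'(y))^{-1}$ and $x\,|T_*''(y)|\,T_*'(y)^{-2}$ are $<1$ and $\to 0$ respectively, again giving the required domination.

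The genuine difficulty is not in this example but in the general theorem of Section~\ref{sec:main}: the combination of nonlinearity and absence of a spectral gap (the site map mixes only polynomially) prevents treating $\cL_\eps$ as a perturbation of a hyperbolic fixed point. This is handled by working in a scale of adapted spaces (quantitative-tail cones / weighted spaces in the spirit of Liverani--Saussol--Vaienti), in which the linear operators $\cL_{\eps h}$ satisfy uniform Lasota--Yorke inequalities while the nonlinear remainder has Lipschitz constant $O(\eps)$; weak coupling then yields the fixed point and the polynomial rate on the regular class, and the extension to all $L^1$ densities uses $L^1$-nonexpansiveness of the linear parts together with a density/coupling argument. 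Within the example the only genuinely new computation is the Fa\`a di Bruno cone estimate above, whose sole delicate regime is the left branch near the neutral fixed point.
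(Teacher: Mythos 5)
Your overall architecture is the paper's own: verify the standing hypotheses of Section~\ref{sec:main} for the family~\eqref{eq:Teh} and then quote the abstract theorem. The gap is that the verification is waved through exactly where the paper has to work. The framework does not run on an estimate of the form $\|\cL_{\eps h}-\cL_{\eps h'}\|\lesssim|\eps|\,\|h-h'\|$ ``in every relevant norm'': because the site maps have no spectral gap, the convergence arguments (Propositions~\ref{prop:mix} and~\ref{prop:physical}) need the difference $\cL_{\eps h_0}v-\cL_{\eps h_1}v$, for $v\in\cD^2_1$, exhibited in the structured form $\delta(f_0-f_1)$ with $\delta\leq C_\beta|\eps|\,\|h_0-h_1\|_{L^1}$ and $f_0,f_1\in\cD^1_1$ satisfying the pointwise bound $f_i(x)\leq C_\beta x^{-\beta}$ with $\beta<\min\{\gamma,1-\gamma\}$, i.e.\ assumption~\eqref{eq:Ldiffgamma}. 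Only such differences can be fed into the sequential memory-loss estimate of Theorem~\ref{thm:Juho} at the improved rate $n^{-1/\gamma+\beta/\gamma}$, which is what makes the convolution sum in Lemma~\ref{lem:conv} (and the sum in~\eqref{eq:Bfh}) work; an operator-norm Lipschitz bound cannot replace it. Verifying this for the example is precisely Proposition~\ref{prop:partialL} and Corollary~\ref{cor:partialL}, which require computing $\partial_s(\cL_s v)$ and $(\partial_s(\cL_s v))'$ and bounding them by $|\eps|x^{-(\gamma_+-\gamma_-)}$ and $|\eps|x^{-(\gamma_+-\gamma_-)-1}$; ``$h$ enters affinely through smooth functionals, so no derivative loss'' is the right intuition but not the needed statement. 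Likewise the ``unusual'' hypothesis~\ref{a:w} (inequality~\eqref{eq:bbb}), which is what drives the $C^r$ cone invariance near the neutral point, is not addressed in your verification step, though your left-branch computation is the germ of it.

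For the smoothness and the bounds~\eqref{eq:Aaaa} you take a genuinely different and more laborious route (induced Gibbs--Markov map, tower, then a Fa\`a di Bruno cone argument for the single map $T_{\eps h_\eps}$). In the paper this is free: the fixed point is produced by Schauder inside the compact convex set $\cD^r_1$, whose defining inequalities are exactly~\eqref{eq:Aaaa} away from $0$, and since the example is $C^\infty$ and satisfies the hypotheses for every $r$ while $h_\eps$ is unique, one gets $h_\eps\in\bigcap_r\cD^r_1$ at once. Your cone $\cC_\ell$ is essentially Lemma~\ref{lem:branch} rediscovered for the fixed map, but as written it has its own missing step: invariance of the cone under $\cL_{T_{\eps h_\eps}}$ does not by itself place the invariant density in the cone. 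You still need a compactness/limit argument (e.g.\ $C^\ell$-precompactness on compact subsets of $(0,1]$ of cone elements plus identification of the limit in $L^1$, or the paper's Schauder route), and the auxiliary claims about real-analyticity of the induced invariant density are neither needed nor justified.
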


\begin{thm}
    \label{thm:example-mix}
    In the setup of Theorem~\ref{thm:example-ac},
    suppose that a probability density $h$ is twice differentiable on $(0,1]$ and satisfies,
    for some $\tA, \ta_1, \ta_2 > 0$ and all $\ell = 1,2$ and $x \in (0,1]$,
    \[
        \int_0^x h(s) \, ds
        \leq \tA x^{1 - 1 / (\gamma_* + \eps_0)}
        \quad \text{and} \quad
        \frac{|h_\eps^{(\ell)}(x)|}{h_\eps(x)}
        \leq \frac{\ta_\ell}{x^\ell}
        .
    \]
    Then
    \begin{equation}
        \label{eq:Ln}
        \| \cL_\eps^n h - h_\eps \|_{L^1}
        \leq C n^{ - (1 - \gamma_* - \eps_0) / (\gamma_* + \eps_0)}
        ,
    \end{equation}
    where $C$ depends only on $\tA, \ta_1, \ta_2$ and $\eps_0$.
\end{thm}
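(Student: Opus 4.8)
The plan is to realise $\cL_\eps^n h$ as the pushforward of $h$ under a \emph{non-autonomous} composition $\cL_{\eps h_{n-1}}\circ\cdots\circ\cL_{\eps h_0}$ of genuine (linear) transfer operators along the orbit $h_j:=\cL_\eps^j h$, and to control it by combining three ingredients: an invariant cone of densities, a uniform polynomial loss-of-memory estimate for such compositions, and a sensitivity estimate for $h\mapsto\cL_{\eps h}$. Write $e_n:=\|\cL_\eps^n h-h_\eps\|_{L^1}$; by Theorem~\ref{thm:example-ac} we already know $e_n\le 2$ and $e_n\to 0$, and the task is to upgrade this to the rate \eqref{eq:Ln} under the stated regularity of $h$.

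\textbf{Step 1 (invariant cone).} First I would fix a convex cone $\cC$ of densities cut out by inequalities of the type appearing in the hypothesis and in \eqref{eq:Aaaa} --- a bound $\int_0^x g\le(\mathrm{const})\,x^{1-1/(\gamma_*+\eps_0)}$ together with bounds $|g^{(\ell)}(x)|\le(\mathrm{const})\,x^{-\ell}g(x)$ for $\ell=1,2$ --- and show that $\cL_{\eps h'}(\cC)\subset\cC$ for every $h'\in\cC$ and every $\eps\in[-\eps_0,\eps_0]$, hence $\cL_\eps(\cC)\subset\cC$. This rests on standard distortion estimates for $T_{\eps h'}$, uniform because the perturbation only moves the order of the neutral point to $\gamma_*+\eps\gamma_{h'}\in(\gamma_*-\eps_0,\gamma_*+\eps_0)\subset(0,1)$ (using $|\gamma_{h'}|\le 1$ for a probability density $h'$) and adds the term $\eps\varphi_{h'}$, which vanishes to order two at $0$. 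By Theorem~\ref{thm:example-ac}, $h_\eps\in\cC$. The regularity assumed of $h$ guarantees that $h$ lies in such a cone, or at least enters $\cC$ after a number of iterates $N=N(\tA,\ta_1,\ta_2)$ of $\cL_\eps$; either way the tail of the orbit $(h_n)_{n\ge N}$ together with $h_\eps$ sits in one fixed cone with data depending only on $\tA,\ta_1,\ta_2,\eps_0$, while for $n<N$ the bound \eqref{eq:Ln} is trivial after enlarging $C$.

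\textbf{Step 2 (loss of memory and sensitivity).} Next I would establish: (i) a uniform polynomial loss-of-memory bound --- for any orbit $(h_j)\subset\cC$ and any mean-zero $f$ in the associated difference cone,
\[
  \bigl\|\cL_{\eps h_{m-1}}\cdots\cL_{\eps h_0}f\bigr\|_{L^1}\le C\,r(m),\qquad r(m):=(1+m)^{-(1-\gamma_*-\eps_0)/(\gamma_*+\eps_0)},
\]
with $C$ depending only on the cone data; and (ii) a sensitivity bound --- since $T_{\eps h}$ depends on $h$ only through the scalars $\gamma_h=\int_0^1 h(s)\sin(2\pi s)\,ds$ and $c_h=\int_0^1 h(s)\cos(2\pi s)\,ds$, and $|\gamma_{h'}-\gamma_{h''}|,|c_{h'}-c_{h''}|\le\|h'-h''\|_{L^1}$, one has $\bigl\|(\cL_{\eps h'}-\cL_{\eps h''})g\bigr\|_{L^1}\le C\eps\,\|h'-h''\|_{L^1}$ for $g\in\cC$, with $(\cL_{\eps h'}-\cL_{\eps h''})g$ mean-zero. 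Part (i) is the polynomial-mixing machinery for intermittent maps (the coupling argument of Liverani--Saussol--Vaienti, in its non-autonomous form, the slowest admissible exponent $\gamma_*+\eps_0$ dictating $r$); part (ii) reduces to a first-order perturbation estimate for the family $x\mapsto x(1+x^{\gamma_*+\eps\gamma})+\eps c\,x^2(1-x)\bmod 1$ in the parameters $(\gamma,c)$, the $\eps$ in front producing the factor $\eps$. Some care is needed near $x=0$ (moving the order of the neutral point is a singular perturbation, so (ii) will really hold in a cone-adapted norm, up to a logarithmic loss the cone can be enlarged to absorb).

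\textbf{Step 3 (telescoping and the renewal inequality; the main obstacle).} Using $h_\eps=\cL_{\eps h_\eps}h_\eps$ and telescoping $\cL_{\eps h_j}=\cL_{\eps h_\eps}+(\cL_{\eps h_j}-\cL_{\eps h_\eps})$ (started at step $N$),
\[
  h_n-h_\eps=\cL_{\eps h_{n-1}}\cdots\cL_{\eps h_N}(h_N-h_\eps)+\sum_{k=N}^{n-1}\cL_{\eps h_{n-1}}\cdots\cL_{\eps h_{k+1}}\bigl((\cL_{\eps h_k}-\cL_{\eps h_\eps})h_\eps\bigr),
\]
so Step 2 yields $e_n\le C_1\,r(n)+C_2\eps\sum_{k=0}^{n-1}e_k\,r(n-1-k)$, with $C_1=C_1(\tA,\ta_1,\ta_2,\eps_0)$ and $C_2=C_2(\eps_0)$. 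When $\gamma_*+\eps_0<\tfrac12$ the rate $r$ is summable, and choosing $\eps_0$ so small that $C_2\eps_0\sum_{m\ge0}r(m)<\tfrac12$ one closes this inequality --- via $\sum_{j+k=n}r(j)r(k)\le C\,r(n)$ and a geometric iteration on $\sup_{k\le n}e_k/r(k)$ --- to get $e_n\le C\,r(n)$, finishing that regime. The difficulty, and the main obstacle, is $\gamma_*+\eps_0\ge\tfrac12$: there $r$ is not summable, and the displayed renewal inequality is genuinely too weak to force $e_n=O(r(n))$ (smallness of $\eps$ shrinks the kernel's coefficient but not its non-summable tail). To handle this I would fold the nonlinear feedback into the operator-renewal framework rather than treating it as an $L^1$-perturbation: induce the (non-autonomous, weakly $h$-dependent) system to the uniformly expanding first-return map away from the neutral point, expand the transfer operators as generating functions $\sum_m T_m z^m$ over excursion lengths, and run a Sarig--Gou\"ezel-type renewal theorem made robust to the small perturbations coming from the slow convergence $h_n\to h_\eps$ --- so that the return-time tail $\sim m^{-1/(\gamma_*+\eps_0)}$ delivers \eqref{eq:Ln} directly, uniformly in small $\eps_0$. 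It is essential here that the coupling enters only through the two scalar functionals $\gamma_{h_n},c_{h_n}$, which are themselves observables and hence decay along the orbit at the correlation rate, keeping the perturbation of the induced data under control.
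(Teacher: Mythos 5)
Your Steps 1--3 reproduce the paper's architecture (invariant cones of densities, sequential loss of memory in the spirit of \cite{KL21}, telescoping along the orbit $h_j=\cL_\eps^j h$ and a renewal-type inequality closed by smallness of $\eps$), but the point you single out as ``the main obstacle'' is exactly where the proposal has a genuine gap. You are right that with only the $L^1$ sensitivity bound $\|(\cL_{\eps h'}-\cL_{\eps h''})g\|_{L^1}\lesssim \eps\|h'-h''\|_{L^1}$ the convolution kernel in the renewal inequality is the generic mixing rate $r(m)=m^{-1/\gamma+1}$ ($\gamma=\gamma_*+\eps_0$), which is not summable when $\gamma\ge\tfrac12$, and then the inequality cannot force $e_n=O(r(n))$. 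But the fix you propose --- inducing and running a Sarig--Gou\"ezel operator renewal theorem ``made robust'' to the non-autonomous perturbations coming from $h_n\to h_\eps$ --- is only a program, not an argument: no non-autonomous version of that machinery is available off the shelf, and your closing remark that the scalars $\gamma_{h_n},c_{h_n}$ ``decay along the orbit at the correlation rate'' presupposes precisely the rate you are trying to prove. So for $\gamma_*+\eps_0\ge\tfrac12$ (a regime the theorem covers, since $\gamma_*\in(0,1)$ is arbitrary) the proposal does not contain a proof.

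The idea you are missing is a \emph{refined} sensitivity statement, which is how the paper stays within the elementary renewal scheme for all $\gamma\in(0,1)$. For the map~\eqref{eq:Teh} one can write (Corollary~\ref{cor:partialL}, abstractly assumption~\eqref{eq:Ldiffgamma})
\[
  \cL_{\eps h_0}v-\cL_{\eps h_1}v=\delta\,(f_0-f_1),\qquad \delta\lesssim|\eps|\,\|h_0-h_1\|_{L^1},
\]
where $f_0,f_1$ are not merely cone densities but satisfy the pointwise bound $f_i(x)\le C_\beta x^{-\beta}$ with $\beta$ small (of order $\eps$ in the example), in particular $\beta<\min\{\gamma,1-\gamma\}$. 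For differences of such \emph{flat} densities the sequential memory-loss estimate improves from $n^{-1/\gamma+1}$ to $n^{-1/\gamma+\beta/\gamma}$ (Theorem~\ref{thm:Juho}, second claim), and since $\beta<1-\gamma$ this exponent is $<-1$: the kernel in the renewal inequality becomes summable for \emph{every} $\gamma\in(0,1)$, and the elementary Lemma~\ref{lem:conv} closes the induction exactly as in your summable case, with no inducing or operator renewal theory needed (this is Proposition~\ref{prop:mix}). In short: the smallness of $\eps$ does not only shrink the coefficient of the kernel, it also flattens the singularity at $0$ of the perturbation densities, and it is the latter effect --- which your $L^1$-only estimate (ii) discards --- that makes the tail summable. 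A secondary, minor difference: rather than waiting $N$ steps for $h$ to enter the invariant cone, the paper handles $h\in\tcD^2_1$ by the convex-combination trick $(h+\xi g)/(\xi+1)\in\cD^2_1$ with $g(x)=(1-\gamma)x^{-\gamma}$, which avoids proving any ``eventual absorption'' into the cone.
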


\begin{rmk}
    The restriction $\eps_0 < 1 - \gamma_*$ serves to guarantee that $\gamma_* + \eps \gamma_h$
    is bounded away from $1$, and that the right hand side of~\eqref{eq:Ln} converges to zero.
\end{rmk}

\begin{rmk}
    A curious corollary of Theorem~\ref{thm:example-ac} is that
    the density of the unique absolutely continuous invariant probability measure
    for the map $x \mapsto x ( 1 + x^\gamma_*)$ is smooth, namely $C^\infty(0,1]$
    with the bounds~\eqref{eq:Aaaa}. Our abstract framework covers such a result also for
    the Liverani-Saussol-Vaienti maps~\cite{LSV99}.
    To the best of our knowledge, this is the first time such a result is written down.
    At the same time, we are aware of at least two different unwritten prior proofs
    which achieve similar or stronger results,
    one by Damien Thomine and the other by Caroline Wormell.
\end{rmk}

\begin{rmk}
    Another example to which our results apply is
    \[
        T_{\eps h} (x)
        = x (1 + x^{\gamma_*})+ \eps x (1-x) \int_0^1h(s) \sin(\pi s) \, ds \mod 1
        ,
    \]
    where $\gamma_* \in (0,1)$ and $\eps \in [0, \eps_0]$.
    This is interesting because now each $T_{\eps h}$ with $\eps > 0$ is uniformly
    expanding, but the expansion is not uniform in $\eps$.
    Thus, even for this example, standard operator contraction techniques
    employed in~\cite{G21,K00} do not apply.
\end{rmk}

\begin{rmk}
    Let $h_\eps$ be as in Theorem \ref{thm:example-ac}.
    A natural question is to study the regularity of the map $\eps \mapsto h_\eps$.
    We expect that it should be differentiable in a suitable topology.
\end{rmk}

The paper is organised as follows.
Theorems~\ref{thm:example-ac} and~\ref{thm:example-mix} are corollaries
of the general results in Section~\ref{sec:main},
where we introduce the abstract framework and state the abstract results.
The abstract proofs are carried out in Section~\ref{sec:proofs},
and in Section~\ref{sec:example} we verify that the specific map~\eqref{eq:Teh}
fits the abstract assumptions.

\section{Assumptions and results}
\label{sec:main}

We consider a family of maps $T_{\eps h} \colon [0,1] \to [0,1]$,
where $\eps \in [-\eps_*, \eps_*]$, $\eps_* > 0$, and $h$ is a probability density on $[0,1]$.

We require that each such $T_{\eps h}$ 
is a full branch increasing map with finitely many branches, i.e.\
there is a finite partition of the interval $(0,1)$
into open intervals $B_{\eps h}^k$, modulo their endpoints,
such that each restriction $T_{\eps h} \colon B_{\eps h}^k \to (0,1)$ is an increasing bijection.

We assume that each restriction $T_{\eps h} \colon B_{\eps h}^k \to (0,1)$ satisfies the following assumptions
with the constants independent of $\eps$, $h$ or the branch:
\begin{enumerate}[label=(\alph*)]
    \item\label{a:r}
        $T_{\eps h}$ is $r+1$ times continuously differentiable with $r \geq 2$.
    \item\label{a:gamma}
        There are $c_\gamma > 0$, $C_\gamma > 1$ and $\gamma \in [0,1)$ such that
        \begin{equation}
            \label{eq:tailgamma}
            1 + c_\gamma x^\gamma
            \leq T_{\eps h}'(x)
            \leq C_\gamma
            .
        \end{equation}
    \item\label{a:w}
        Denote $w = 1 / T'_{\eps h}$. There are $b_1, \ldots, b_r > 0$ and $\chi_* \in (0,1]$ so that
        for all $1 \leq \ell \leq r$, $0 \leq j \leq \ell$
        and each monomial $w_{\ell,j}$ in the
        expansion of $(w^\ell)^{(\ell-j)}$,
        \begin{equation}
            \label{eq:bbb}
            \frac{w^\ell}{\chi_\ell}
            \leq \frac{1}{\chi_\ell \circ T_{\eps h}} - b_\ell \frac{|w_{\ell,j}|}{\chi_j}
            ,
        \end{equation}
        where $\chi_\ell(x) = \min \{ x^\ell, \chi_* \}$.
        (For example, the expansion of $(w^3)''$ is $6w (w')^2 + 3 w^2 w''$.)
    \item\label{a:d}
        If $\partial B^k_{\eps h} \not \ni 0$, i.e.\ $B^k_{\eps h}$ is not the leftmost branch, then $T_{\eps h}$
        has bounded distortion:
        \begin{equation}
            \label{eq:dist2}
            \frac{T_{\eps h}''}{(T_{\eps h}')^2}
            \leq C_d
            ,
        \end{equation}
        with $C_d > 0$.
\end{enumerate}

\begin{rmk}
    Assumption~\ref{a:w} is unusual, but we did not see a way to replace it with something natural.
    At the same time, it is straightforward to verify and to apply.
    It plays the role of a distortion bound in $C^r$ adapted to an intermittency at $0$.
\end{rmk}

In addition to the above, we assume that the transfer operators corresponding to $T_{\eps h}$
vary nicely in $h$.
We state this formally in~\eqref{eq:Ldiffgamma}, after we introduce the required notation.

Define the transfer operators $\cL_{\eps h}$ and $\cL_\eps$ as in~\eqref{eq:tr}
and~\eqref{eq:selfcL}.

For an integer $k \geq 1$, let $H^k$ denote the set of $k$-H\"older functions
$g \colon (0,1] \to (0, \infty)$, i.e.\ such that $g$ is $k - 1$ times continuously
differentiable with $g^{(k-1)}$ Lipschitz.
Denote $\Lip_g (x) = \limsup_{y \to x} |g(x) - g(y)| \big/ |x - y|$.

Suppose that $a_1, \ldots, a_r > 0$. For $1 \leq k \leq r$, let
\begin{equation}
    \label{eq:cCr}
    \begin{aligned}
        \cD^k
        = \Bigl\{
            g \in H^k
            : \ &
            \frac{|g^{(\ell)}|}{g} \leq \frac{a_\ell}{\chi_\ell}
            \ \ \text{for all} \ \ 1 \leq \ell < k,
            \\ &
            \frac{\Lip_{g^{(k-1)}}}{g} \leq \frac{a_k}{\chi_k}
        \Bigr\}
        .
    \end{aligned}
\end{equation}
Take $A > 0$ and let
\begin{equation}
    \label{eq:cCr1}
    \cD^k_1
    = \Bigl\{
        g \in \cD^k :
        \textstyle \int_0^1 g(s) \, ds = 1
        , \ 
        \textstyle \int_0^x g(s) \, ds \leq A x^{1 - \gamma}
    \Bigr\}
    .
\end{equation}

\begin{rmk}
    \label{rmk:xgamma}
    If $g \in \cD^1_1$ then $g(x) \leq C x^{-\gamma}$ where
    $C$ depends only on $a_1$ and $A$.
\end{rmk}

Now and for the rest of the paper we fix $a_1, \ldots, a_r$ and $A$ so that $\cD^k$ and $\cD_1^k$
are non-empty and invariant under $\cL_{\eps h}$. This can be done thanks to the following lemma.

\begin{lem}
    \label{lem:Cr}
    There are $a_1, \ldots, a_r, A > 0$ such that for all $1 \leq q \leq r$,
    \begin{enumerate}[label=(\alph*)]
        \item\label{lem:Cr:d} $g \in \cD^q$ implies $\cL_{\eps h} g \in \cD^q$.
        \item\label{lem:Cr:1} $g \in \cD^q_1$ implies $\cL_{\eps h} g \in \cD^q_1$.
    \end{enumerate}
    Moreover, given $C > 0$ we can ensure that
    $\min\{a_1, \ldots, a_r, A \} > C$.
\end{lem}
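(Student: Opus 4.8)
The plan is to choose the constants $a_1, \ldots, a_r$ and $A$ inductively, starting from $a_r$ down to $a_1$ (or possibly in a different order dictated by the coupling in assumption~\ref{a:w}), and then to verify invariance of $\cD^q$ branch by branch using the transfer operator formula
\[
    (\cL_{\eps h} g)(x) = \sum_k (g \cdot w)(y_k(x)),
    \qquad y_k(x) = (T_{\eps h}|_{B^k_{\eps h}})^{-1}(x),
    \quad w = 1/T'_{\eps h}.
\]
First I would differentiate this $\ell$ times: by the Leibniz/Fa\`a di Bruno expansion, $(\cL_{\eps h} g)^{(\ell)}(x)$ is a sum over branches of terms of the form $g^{(j)}(y_k) \cdot w_{\ell,j}(y_k)$, where $w_{\ell,j}$ ranges over the monomials in the expansion of $(w^\ell)^{(\ell-j)}$ appearing in assumption~\ref{a:w} --- this is precisely why that assumption is phrased the way it is. Estimating $|g^{(j)}(y_k)| \leq g(y_k) a_j / \chi_j(y_k)$ for $j < \ell$ (and using the $\Lip$ bound for $j = \ell$), one gets
\[
    \frac{|(\cL_{\eps h} g)^{(\ell)}(x)|}{(\cL_{\eps h} g)(x)}
    \leq \max_k \sum_{j} a_j \frac{|w_{\ell,j}(y_k)|}{\chi_j(y_k)}
    \Big/ \text{(something)},
\]
and assumption~\ref{a:w} is designed so that $\sum_j b_\ell |w_{\ell,j}|/\chi_j \leq 1/(\chi_\ell \circ T) - w^\ell/\chi_\ell$, which telescopes against $\chi_\ell(x)^{-1}$ after summing the transfer operator. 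The key mechanism: $1/\chi_\ell \circ T$ evaluated at $y_k$ is $1/\chi_\ell(x)$, so the ``bad'' term $a_\ell/\chi_\ell(x)$ on the right is produced with a controllable coefficient, while the genuinely dangerous lower-order contributions are absorbed because $w^\ell/\chi_\ell \leq 1/\chi_\ell\circ T$, i.e.\ the branch inverses are contractions in the relevant weighted sense near $0$. Choosing $a_\ell$ large enough relative to $a_1, \ldots, a_{\ell-1}$ and the constants $b_\ell$ closes the induction; the final clause ``$\min\{a_\ell, A\} > C$'' is free because we may always enlarge the $a_\ell$ and $A$ --- the inequalities defining $\cD^q$ only get easier to satisfy --- as long as we re-check that enlarging does not break an inequality that needs an \emph{upper} bound on some $a_j$, which it does not here since every constraint is an upper bound on $|g^{(\ell)}|/g$.

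For part~\ref{lem:Cr:1}, the normalisation $\int_0^1 \cL_{\eps h} g = \int_0^1 g = 1$ is automatic since $\cL_{\eps h}$ preserves total integral. The tail bound $\int_0^x \cL_{\eps h} g(s)\,ds \leq A x^{1-\gamma}$ is the substantive point: I would use that $\int_0^x (\cL_{\eps h} g)(s)\,ds = \int_{T_{\eps h}^{-1}[0,x]} g(s)\,ds$, and that $T_{\eps h}^{-1}[0,x]$ consists of one interval $[0, y_0(x)]$ near the indifferent fixed point together with one interval inside each other branch. On the leftmost branch, $T_{\eps h}(y) = y(1 + c_\gamma y^\gamma + \cdots)$-type behaviour from~\ref{a:gamma} gives $y_0(x) \leq (\text{const}) x$ for $x$ small but more importantly $x = T_{\eps h}(y_0) \geq y_0 + c_\gamma y_0^{1+\gamma}/(1+\gamma)$-ish, hence $y_0(x) \lesssim x$ and the dangerous piece $\int_0^{y_0(x)} g \leq A y_0(x)^{1-\gamma}$; for the other branches, bounded distortion~\ref{a:d} controls the preimage length by $\lesssim x / \inf T'$ and $g(s) \leq C s^{-\gamma}$ (Remark~\ref{rmk:xgamma}) on a region bounded away from $0$ gives a contribution $\lesssim x$. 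Summing and comparing with $A x^{1-\gamma}$ forces a relation of the form $A \cdot (\text{contraction factor} < 1) + (\text{lower order}) \leq A$, which holds once $A$ is large; again this is compatible with making $A$ as large as we like. The main obstacle will be bookkeeping the constants in assumption~\ref{a:w} through the Leibniz expansion so that the induction on $\ell$ genuinely closes --- in particular making sure that the coefficient multiplying the surviving $a_\ell/\chi_\ell(x)$ term, which comes from the ``$1/\chi_\ell \circ T$'' side, can be beaten by choosing $a_\ell$ large without simultaneously inflating the lower-order terms faster; here the precise shape $w^\ell/\chi_\ell \leq 1/\chi_\ell\circ T - b_\ell |w_{\ell,j}|/\chi_j$ (with the subtracted term, not just a bound) is exactly what makes the slack available.
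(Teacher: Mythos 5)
Your overall route is the same as the paper's: branchwise invariance of $\cD^q$ via the expansion of $(\cL_{\eps h} g)^{(\ell)}$ combined with assumption~\ref{a:w}, with $a_\ell$ chosen inductively so that $a_\ell$ dominates $\max_{j<\ell} a_j$ up to the constant coming from~\eqref{eq:bbb}, followed by the addition over branches; and then, for $\cD^q_1$, the tail bound obtained by splitting $T_{\eps h}^{-1}(0,x)$ into the leftmost-branch piece and the rest and taking $A$ large. Part~\ref{lem:Cr:d} of your sketch, modulo loose bookkeeping (\eqref{eq:bbb} is stated per monomial, so you first pass to a summed version with a smaller constant $c_\ell$; and the Lipschitz condition of $\cD^q$ is only needed at the top order, not ``for $j=\ell$''), closes exactly as in the paper.

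There is, however, one step in part~\ref{lem:Cr:1} that fails as written: you bound $g$ on the non-leftmost preimages by $g(s)\leq C s^{-\gamma}$ citing Remark~\ref{rmk:xgamma}, whose constant $C$ depends on $A$ (it is of order $(a_1+1)A$, being derived from $\int_0^x g \leq A x^{1-\gamma}$). Note also that the gain from the leftmost branch is not a uniform contraction factor: since $T_{\eps h}'\to 1$ at the indifferent point, $T_1^{-1}(x)\leq x/(1+c x^\gamma)$ yields only the additive gain $A x^{1-\gamma} - A c' x$. So the inequality you ultimately need is $A c' x \geq (\sup \text{ of } g \text{ away from } 0)\cdot C'' x$, and if that sup is proportional to $A$ then both sides scale with $A$ and ``take $A$ large'' no longer helps; you would instead need a fixed numerical inequality of the type $(a_1+1)C'' \leq c'$, which there is no reason to have, and you cannot shrink $a_1$ since part~\ref{lem:Cr:d} forces it to be large. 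The repair is to bound $g$ on $(C_\gamma^{-1},1)$ by a constant depending only on $a_1$, $\chi_*$, $C_\gamma$ and \emph{not} on $A$: this follows from the normalisation $\int_0^1 g = 1$ together with the cone condition $|g'|/g \leq a_1/\chi_1$, which makes $g$ vary by a bounded factor on that interval. This is exactly what the paper does, and with this substitution your ``$A$ large'' argument closes.
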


The proof of Lemma~\ref{lem:Cr} is postponed to Section~\ref{sec:proofs}.

Finally, we assume that there are $0 \leq \beta < \min\{ \gamma, 1 - \gamma \}$
and $C_\beta > 0$ such that if $h_0, h_1 \in L^1$
and $v \in \cD^2_1$, then
\begin{equation}
    \label{eq:Ldiffgamma}
    \cL_{\eps h_0} v - \cL_{\eps h_1} v
    = \delta (f_0 - f_1),
\end{equation}
for some $f_0, f_1 \in \cD^1_1$ with $f_0(x), f_1(x) \leq C_\beta x^{-\beta}$
and $\delta \leq |\eps| C_\beta \|h_0 - h_1\|_{L^1}$.

Let $\bfC = (\eps_*, r, c_\gamma, C_\gamma, \gamma, b_1, \ldots, b_r, \chi_*, C_d,
A, a_1, \ldots, a_r, \beta, C_\beta)$
be the collection of constants from the above assumptions.

Our main abstract result is the following theorem:

\begin{thm}
    \label{thm:acim}
    There exists $\eps_0 > 0$ such that for every $\eps \in [-\eps_0, \eps_0]$:
    \begin{enumerate}[label=(\alph*)]
        \item\label{thm:acim:h}
            There exists $h_\eps$ in $\cD^r_1$ so that for every probability density $h$,
            \[
                \lim_{n \to \infty} \| \cL_\eps^n h - h_\eps \|_{L^1}
                = 0
                .
            \]
        \item\label{thm:acim:D}
            Let $\tcD^2_1$ be a version of $\cD^2_1$ with constants $\tA, \ta_1, \ta_2$ in place of 
            $A, a_1, a_2$. (We do not require that $\tcD^2_1$ is invariant.)
            Then for every $h \in \tcD^2_1$,
            \[
                \| \cL_\eps^n h - h_\eps \|_{L^1}
                \leq C n^{1-1/\gamma}
                ,
            \]
            where $C$ depends only on $\bfC$ and $\tA, \ta_1, \ta_2$.
    \end{enumerate}
\end{thm}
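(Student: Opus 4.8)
The plan is to fix the intermittency parameter $\gamma$ (recall it is uniform over $\eps$ and $h$ by assumption~\ref{a:gamma}) and run a contraction argument in a suitable metric on the invariant cone $\cD^2_1$, exploiting the fact that the induced (first-return to a uniformly hyperbolic region) dynamics contracts at a rate controlled by $\gamma$. First I would show that the non-linearity is a small perturbation of a single intermittent map: given $h_0, h_1$, assumption~\eqref{eq:Ldiffgamma} says $\cL_{\eps h_0} v - \cL_{\eps h_1} v = \delta(f_0 - f_1)$ with $\delta \leq |\eps| C_\beta \|h_0 - h_1\|_{L^1}$ and $f_0, f_1 \in \cD^1_1$ enjoying the mild singularity bound $f_i(x) \leq C_\beta x^{-\beta}$ with $\beta < \min\{\gamma, 1-\gamma\}$. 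So for fixed $\eps$ small, iterating $\cL_\eps$ starting from two densities $h, h' \in \cD^2_1$ produces orbits that stay in the invariant cone (Lemma~\ref{lem:Cr}) and differ at each step by a $\cD^1_1$-bounded perturbation of controlled size.

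The core estimate I would establish is a uniform polynomial mixing rate for the family of \emph{linear} transfer operators: for any sequence $\eps h_1, \eps h_2, \ldots$ with $h_i$ probability densities, and for $g, g' \in \cD^2_1$, one has $\|\cL_{\eps h_n} \cdots \cL_{\eps h_1}(g - g')\|_{L^1} \leq C n^{1-1/\gamma}$ with $C = C(\bfC)$. This is the non-autonomous (sequential) version of the classical Young-tower / operator-renewal polynomial decay bound for Pomeau–Manneville maps. The route I would take: build a common inducing scheme — since all branches away from $0$ have uniformly bounded distortion~\ref{a:d} and the expansion near $0$ is governed by the \emph{same} exponent $\gamma$, the first-return map to a fixed interval $[\theta, 1]$ (away from the neutral fixed point) is uniformly expanding with uniformly bounded distortion, and the return-time tail is $\bP(\tau > n) \lesssim n^{-1/\gamma}$ uniformly in the sequence. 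Assumption~\ref{a:w} is precisely the $C^r$ distortion control adapted to the indifferent point that makes the cone $\cD^k$ honestly invariant and keeps densities in the tower comparable. Then a Banach-space / cone-contraction argument for the sequential induced system (uniform Doeblin-type contraction in projective metric, cf.\ the autonomous arguments of Liverani–Saussol–Vaienti and their non-autonomous extensions), combined with the standard renewal decomposition $\cL^n = \sum$ (return-time contributions) and summation of the tail $\sum_{k>n} \bP(\tau > k) \lesssim n^{1-1/\gamma}$, yields the claimed rate. The dependence on only $\bfC$ is automatic because every constant in this construction (inducing domain, distortion constant, tail constant, contraction rate) depends only on the uniform data in $\bfC$.

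With the sequential linear bound in hand, part~\ref{thm:acim:D} follows almost immediately once we know there \emph{is} a fixed point $h_\eps$: write $\cL_\eps^n h = \cL_{\eps h_{n-1}} \cdots \cL_{\eps h_0} h$ where $h_j = \cL_\eps^j h$, and $h_\eps = \cL_{\eps h_\eps} \cdots \cL_{\eps h_\eps} h_\eps$; both are products of linear operators from the same family applied to elements of $\cD^2_1$ (for $h_\eps$ this is Theorem~\ref{thm:acim}\ref{thm:acim:h}, for $h$ it is the hypothesis, and $\tcD^2_1$-membership of $h$ can be absorbed by running one linear step to land in $\cL_{\eps h_0}\tcD^2_1$, with the constant $C$ then depending on $\tA, \ta_1, \ta_2$). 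A slight subtlety: the two products use \emph{different} sequences of maps, so I would instead telescope — estimate $\|\cL_\eps^n h - \cL_\eps^n h_\eps\|_{L^1}$ by a hybrid argument, bounding $\|\cL_{\eps h_{n-1}}\cdots \cL_{\eps h_0} h - \cL_{\eps h_\eps}\cdots \cL_{\eps h_\eps} h_\eps\|$ by inserting intermediate terms where one swaps $h_j \leftrightarrow h_\eps$ one coordinate at a time, using~\eqref{eq:Ldiffgamma} for each swap (cost $\lesssim |\eps| \|h_j - h_\eps\|_{L^1}$) and the linear sequential decay to propagate the resulting $\cD^1_1$-error forward; since $\|h_j - h_\eps\|_{L^1} \to 0$ by part~\ref{thm:acim:h} and is itself $\lesssim j^{1-1/\gamma}$ once the rate is bootstrapped, the geometric-type sum of these contributions is again $O(n^{1-1/\gamma})$ for $\eps_0$ small (the smallness of $\eps$ guarantees the self-consistent feedback does not destroy the rate).

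Existence of $h_\eps$ and the $L^1$-convergence in part~\ref{thm:acim:h}, which I would actually prove first (Theorem~\ref{thm:acim:D} being logically downstream of it), follows from a fixed-point argument: $\cD^r_1$ is convex and, by Lemma~\ref{lem:Cr}, $\cL_\eps$-invariant; with the uniform $x^{-\gamma}$ density bound of Remark~\ref{rmk:xgamma} and the $C^r$ bounds it is compact in $L^1$ (Arzelà–Ascoli away from $0$ plus the uniform integrable tail near $0$), so Schauder–Tychonoff gives a fixed point $h_\eps \in \cD^r_1$. The $L^1$-convergence of $\cL_\eps^n h$ for an \emph{arbitrary} probability density $h$ (not necessarily regular) is the softest part: one step of $\cL_{\eps h}$ already regularizes (push-forward of any probability density under a full-branch map with the cone structure lands in a set with controlled tails), so it reduces to the case $h \in \cD^2_1$, where the sequential linear decay against $h_\eps$ gives convergence with no rate needed. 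The main obstacle, and where I expect to spend most of the work, is the uniform sequential polynomial mixing estimate for the linear operators — making the inducing construction genuinely uniform over all admissible $h$-sequences and tracking that the distortion/tail constants depend only on $\bfC$; assumption~\ref{a:w} is the technical device that makes this go through in $C^r$ rather than just $C^1$.
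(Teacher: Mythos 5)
Your overall architecture matches the paper's: Schauder in the compact invariant cone $\cD^r_1$ for existence of $h_\eps$, a uniform \emph{sequential} polynomial memory-loss estimate for the linear operators $\cL_{\eps h_n}\cdots\cL_{\eps h_1}$ as the workhorse (the paper does not reprove this by inducing/renewal as you propose, but imports it from \cite{KL21} after verifying its hypotheses via Lemma~\ref{lem:dist}), and a telescoping-over-swaps argument using~\eqref{eq:Ldiffgamma} plus smallness of $\eps$ to pass from the sequential linear estimate to the self-consistent operator. However, there are two genuine gaps. First, the quantitative bootstrap does not close with the only rate you state. Your core estimate gives decay $n^{1-1/\gamma}$ for differences of cone elements, and you propose to propagate each swap error (of size $\lesssim|\eps|\,\|h_j-h_\eps\|_{L^1}$) forward with ``the linear sequential decay''; with that rate the recursion is $\delta_n\le\xi n^{1-1/\gamma}+C|\eps|\sum_{j<n}\delta_j(n-j)^{1-1/\gamma}$, and plugging in the ansatz $\delta_j\lesssim j^{1-1/\gamma}$ the convolution is of order $n^{3-2/\gamma}$, which dominates $n^{1-1/\gamma}$ whenever $\gamma>1/2$ --- no smallness of $\eps$ rescues this, since the mismatch grows with $n$. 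What makes the argument work in the paper is the \emph{refined} sequential rate for the swap errors: since~\eqref{eq:Ldiffgamma} produces densities bounded by $C_\beta x^{-\beta}$ with $\beta<\min\{\gamma,1-\gamma\}$, Theorem~\ref{thm:Juho} gives decay $(n-j)^{-1/\gamma+\beta/\gamma}$, and exactly the condition $\beta<1-\gamma$ makes the convolution inequality of Lemma~\ref{lem:conv} true, so the induction $\delta_n\lesssim n^{1-1/\gamma}$ closes. You quote the $x^{-\beta}$ bound in your first paragraph but never use it where it is needed; your ``geometric-type sum'' remark conceals precisely this convolution estimate.

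Second, for part~\ref{thm:acim:h} with an arbitrary probability density, the claim that ``one step of $\cL_{\eps h}$ already regularizes'' is false: the transfer operator of a finite-branch map does not smooth a rough $L^1$ density, so after one (or any finite number of) steps you are not in $\cD^2_1$. The paper instead approximates: $C^\infty$ densities are dense in $L^1$, a convex combination with a cone element puts the approximant inside $\cD^r_1$ (Lemma~\ref{lem:inD}), and then --- because the operator is nonlinear, so $L^1$-closeness of initial data does not by itself control the orbits --- one needs a second telescoping argument (Proposition~\ref{prop:physical}) in which the accumulated swap errors are summable ($-1/\gamma+\beta/\gamma<-1$) and $C''|\eps|<1$ is used to absorb the feedback; only then does $\|\cL_\eps^n f-\cL_\eps^n\tf\|_{L^1}$ stay of order $\delta$ uniformly in $n$. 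Your reduction ``to the case $h\in\cD^2_1$, where the sequential linear decay gives convergence with no rate needed'' skips exactly this control of the self-consistent error propagation. Finally, a minor remark: your plan to establish the sequential memory-loss estimate from scratch (uniform inducing, projective cone contraction, renewal summation) is far heavier than the paper's citation of \cite[Theorem~3.8]{KL21} and is itself nontrivial in the nonstationary setting, but it is a plausible alternative route provided all constants are tracked in terms of $\bfC$, as you indicate.
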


\section{Proofs}
\label{sec:proofs}

In this section we prove Lemma~\ref{lem:Cr} and Theorem~\ref{thm:acim}.
The latter follows from Lemma~\ref{lem:fix} and Propositions~\ref{prop:mix}, \ref{prop:physical}.

Throughout we work with maps $T_{\eps h}$ as per our assumptions, in particular $\eps$ is always
assumed to belong to $[-\eps_*, \eps_*]$, and $h$ is always a probability density.

\subsection{Invariance of \texorpdfstring{$\cD^q$, $\cD^q_1$}{cones} and distortion bounds}

We start with the proof of Lemma~\ref{lem:Cr}. Our construction of $A, a_1, \ldots, a_r$
allows them to be arbitrarily large, and without mentioning this further,
we restrict the choice so that
\begin{equation}
    \label{eq:xgamma}
    x \mapsto (1 - \gamma) x^{-\gamma}
    \quad \text{is in} \quad
    \breve{\cD}_1^r
    ,
\end{equation}
where $\breve{\cD}_1^r$ is the version of $\cD_1^r$ with $A/2, a_1 / 2, \ldots, a_r / 2$ in place
of $A, a_1, \ldots, a_r$.
Informally, we require that $(1-\gamma) x^{-\gamma}$ is deep inside $\cD^r_1$.

\begin{lem}
    \label{lem:branch}
    There is a choice of $a_1, \ldots, a_r$ such that
    if $B \subset (0,1)$ is a branch of $T_{\eps h}$ and $g \in \cD^q$ with $1 \leq q \leq r$,
    then $\cL_{\eps h} (1_B g) \in \cD^q$.
\end{lem}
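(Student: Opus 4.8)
The plan is to compute $\cL_{\eps h}(1_B g)$ explicitly for a single branch and verify each of the defining inequalities of $\cD^q$ directly. Writing $\psi \colon (0,1) \to B$ for the inverse branch of $T_{\eps h}\colon B \to (0,1)$ and $w = 1/T'_{\eps h}$, we have
\[
    \bigl(\cL_{\eps h}(1_B g)\bigr)(x)
    = \bigl( g \cdot w \bigr)(\psi(x))
    = \bigl( (g w)\circ \psi \bigr)(x)
    .
\]
Since $\psi' = w \circ \psi$, differentiating $\ell$ times (Faà di Bruno / the Leibniz rule applied repeatedly) expresses $\bigl(\cL_{\eps h}(1_B g)\bigr)^{(\ell)}$ as a sum of terms of the form $\bigl(g^{(i)}\circ \psi\bigr)\cdot\bigl((w^{\ell-\text{something}})^{(\cdot)}\circ\psi\bigr)$; more precisely each $\ell$-th derivative is a combination of $g^{(i)}\circ\psi$ for $0\le i\le \ell$ multiplied by the monomials $w_{\ell,j}$ appearing in the expansion of $(w^\ell)^{(\ell-j)}$ that are singled out in assumption~\ref{a:w}. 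This is exactly why assumption~\ref{a:w} is phrased the way it is.

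First I would treat the cases $B$ not the leftmost branch and $B$ the leftmost branch separately, since only the latter sees the indifferent point and needs~\ref{a:w}; on the non-leftmost branches bounded distortion~\ref{a:d} and $r+1$-fold differentiability~\ref{a:r} give clean control of all derivatives of $w$, and one bounds $g^{(i)}\circ\psi$ by $g\circ\psi$ times $a_i/\chi_i(\psi(x))$ using $g\in\cD^q$, absorbing the $\chi_i(\psi(x))^{-1}$ factors into the constants since $\psi(x)$ is bounded away from $0$. Next, for the leftmost branch, I would organize the bound on $\bigl|\bigl(\cL_{\eps h}(1_B g)\bigr)^{(\ell)}(x)\bigr| / \bigl(\cL_{\eps h}(1_B g)\bigr)(x)$: after dividing through by $(gw)\circ\psi$, each term becomes $\frac{g^{(i)}\circ\psi}{g\circ\psi}\cdot\frac{w_{\ell,i}}{w}\circ\psi$, which is at most $\frac{a_i}{\chi_i\circ\psi}\cdot\frac{|w_{\ell,i}|}{w}\circ\psi$. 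The key algebraic inequality~\eqref{eq:bbb}, rearranged, says exactly that $b_\ell\,\frac{|w_{\ell,j}|}{\chi_j}\le\frac{1}{\chi_\ell\circ T_{\eps h}}-\frac{w^\ell}{\chi_\ell}$, i.e. $\frac{|w_{\ell,j}|}{\chi_j}\circ\psi\le\frac{1}{b_\ell}\bigl(\frac{1}{\chi_\ell}-\frac{w^\ell}{\chi_\ell}\circ\psi\bigr)$ evaluated appropriately; summing these contributions and choosing $a_1,\dots,a_r$ large enough (in a bottom-up induction on $\ell$, so that the constants $a_i$ for $i<\ell$ are already fixed when we bound the $\ell$-th derivative) gives the required $\frac{a_\ell}{\chi_\ell(x)}$, with room to spare. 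The top-order estimate on $\Lip_{g^{(q-1)}}$ is handled the same way, using that $g^{(q-1)}\circ\psi$ is Lipschitz with the stated bound and $\psi$ is a contraction, plus Lipschitz control of the relevant $w$-monomials from~\ref{a:r} and~\ref{a:w}.

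The main obstacle I expect is the bookkeeping of the combinatorial expansion: one must check that \emph{every} monomial that arises when differentiating $(gw)\circ\psi$ up to order $r$ is dominated, after dividing by $(gw)\circ\psi$, by one of the terms that assumption~\ref{a:w} controls — i.e. that the monomials $w_{\ell,j}$ in~\ref{a:w} really do exhaust what the Leibniz/Faà di Bruno expansion produces, with the $\chi$-weights matching. Getting the induction order right (fixing $\chi_*$ small, then $a_1$, then $a_2$, \dots, each step only needing smallness of $\chi_*$ and largeness of the earlier $a_i$, never circularly) is the delicate part; once the scheme is set up, each individual inequality follows from~\eqref{eq:bbb} and boundedness of $w$ and its derivatives. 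Finally, the claim ``$\min\{a_1,\dots,a_r\}>C$'' from the statement is immediate since the construction only ever asks the $a_i$ to be large.
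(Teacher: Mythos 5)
Your core argument is essentially the paper's proof: write $\cL_{\eps h}(1_Bg)=(gw)\circ T^{-1}$ on the branch, expand the $\ell$-th derivative as $g^{(\ell)}w^{\ell+1}$ plus lower-order terms $g^{(j)}$ times monomials from $(w^\ell)^{(\ell-j)}$, divide by $(gw)\circ T^{-1}$, bound $|g^{(j)}|/g$ by $a_j/\chi_j$, rearrange~\eqref{eq:bbb} so the gain $\frac{1}{\chi_\ell\circ T_{\eps h}}-\frac{w^\ell}{\chi_\ell}$ absorbs the lower-order contributions, and choose $a_\ell$ inductively large relative to $a_1,\dots,a_{\ell-1}$ (the paper takes $a_\ell\geq c_\ell^{-1}\max_{j<\ell}a_j$), with the same treatment of the top-order Lipschitz bound. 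One caveat: your case split into leftmost versus non-leftmost branches is unnecessary and, as written, the non-leftmost case is not justified --- assumption~\ref{a:d} only controls $T_{\eps h}''/(T_{\eps h}')^2$ and~\ref{a:r} gives no bounds on higher derivatives of $w$ that are uniform in $h$ and $\eps$; however, since~\ref{a:w} is assumed on \emph{every} branch, your leftmost-branch argument applies verbatim to all branches (this is what the paper does), so the proof goes through without that detour.
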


\begin{proof}
    To simplify the notation, let $T \colon B \to (0,1)$
    denote the restriction of $T_{\eps h}$ to $B$. Then its inverse $T^{-1}$ is well defined.
    Let $w = 1 / T'$ and $f = (g w) \circ T^{-1}$.
    We have to choose $a_1, \ldots, a_r$ show that $f \in \cD^q$ independently of $g$ and $B$.

    For illustration, it is helpful to write out a couple of derivatives of $f$: 
    \begin{align*}
        f' & = [g' w^2 + g w' w] \circ T^{-1}
        , \\
        f'' & = [g'' w^3 + 3 g' w' w^2 + g w'' w^2 + g (w')^2 w] \circ T^{-1}
        .
    \end{align*}
    An observation that $f^{(\ell)} = (u_\ell w) \circ T^{-1}$, where
    $u_0 = g$ and $u_{\ell + 1} = (u_\ell w)'$, generalizes the pattern:
    \begin{equation}
        \label{eq:bnau}
        f^{(\ell)}
        = \Bigl[
            g^{(\ell)} w^{\ell + 1} + \sum_{j = 0}^{\ell - 1} g^{(j)} W_{\ell,j} w
        \Bigr] \circ T^{-1}
        .
    \end{equation}
    Here each $W_{\ell,j}$ is a linear combination of monomials from the expansion of
    $(w^\ell)^{(\ell-j)}$.

    By~\eqref{eq:bbb}, for each $\ell$ there is $c_\ell > 0$, depending only on $b_1, \ldots, b_{\ell-1}$, such that
    \[
        \frac{w^\ell}{\chi_\ell}
        \leq \frac{1}{\chi_\ell \circ T}
        - c_\ell \sum_{j=0}^{\ell-1} \frac{|W_{\ell,j}|}{\chi_j}
        .
    \]
    Using this and the triangle inequality,
    \begin{equation}
        \label{eq:horror}
        \begin{aligned}
            \Bigl|
            \frac{g^{(\ell)}}{g} w^{\ell}
        & + \sum_{j = 0}^{\ell - 1} \frac{g^{(j)}}{g} W_{\ell,j}
        \Bigr|
        \leq \frac{|\chi_\ell g^{(\ell)}|}{g} \frac{w^{\ell}}{\chi_\ell}
        + \max_{j < \ell} \frac{|\chi_j g^{(j)}|}{g} \sum_{j = 0}^{\ell - 1} \frac{|W_{\ell,j}|}{\chi_j}
        \\
        & \leq \frac{|\chi_\ell g^{(\ell)}|}{\chi_\ell \circ T \, g}
        - \Bigl[
            c_\ell \frac{|\chi_\ell g^{(\ell)}|}{g}
            - \max_{j < \ell} \frac{|\chi_j g^{(j)}|}{g}
        \Bigr]
        \sum_{j = 0}^{\ell - 1} \frac{|W_{\ell,j}|}{\chi_j}
        .
        \end{aligned}
    \end{equation}
    Choose $a_1 \geq c_1^{-1}$ and $a_\ell \geq c_\ell^{-1} \max_{j < \ell} a_j$ for $2 \leq \ell \leq r$.
    It is immediate that if $g \in \cD^q$ and $1 \leq \ell < q$, then
    the right hand side of~\eqref{eq:horror} is at most $a_\ell / \chi_\ell \circ T$,
    which in turn implies that $f^{(\ell)} / f \leq a_\ell / \chi_\ell$.
    A similar argument yields $\Lip_{f^{(q-1)}} / f \leq a_q / \chi_q$,
    hence $f \in \cD^q$ as required.
\end{proof}

\begin{proof}[Proof of Lemma~\ref{lem:Cr}]
    First we show that part~\ref{lem:Cr:d} follows from Lemma~\ref{lem:branch}.
    Indeed, let $a_1, \ldots, a_r$ be as in Lemma~\ref{lem:branch} and suppose that
    $g \in \cD^q$. Write
    \[
        \cL_{\eps h} g
        = \sum_B \cL_{\eps h} (1_B g)
        ,
    \]
    where the sum is taken over the branches of $T_{\eps h}$.
    Each $\cL_{\eps h} (1_B g)$ belongs to $\cD^q$ by Lemma~\ref{lem:branch},
    and $\cD^q$ is closed under addition. Hence $\cL_{\eps h} g \in \cD^q$.

    It remains to prove part~\ref{lem:Cr:1} by choosing a suitable $A$.
    Without loss of generality, we restrict to $q = 1$.

    Fix $\eps$, $h$ and denote, to simplify notation, $T = T_{\eps h}$ and $\cL = \cL_{\eps h}$.
    Suppose that $g \in \cD^1$ with $\int_0^1 g(s) \, ds = 1$ and
    $\int_0^x g(s) \, ds \leq A x^{1-\gamma}$ for all $x$.
    We have to show that if $A$ is sufficiently large, then
    $\int_0^x (\cL g)(s) \, ds \leq A x^{1-\gamma}$.
    
    Suppose that $T$ has branches $B_1, \ldots, B_N$, where $B_1$ is the leftmost branch.
    Denote by $T_k \colon B_k \to (0,1)$ the corresponding restrictions.
    Taking the sum over branches, write
    \begin{equation}
        \label{eq:k}
        \int_0^x (\cL g)(s) \, ds
        = \sum_{k = 1}^N \int_{T_k^{-1} (0,x)} g(s) \, ds
        .
    \end{equation}
    Since $T_1^{-1}(x) \leq x / (1 + c x^\gamma)$ with some $c$ depending only on $c_\gamma$ and $\gamma$,
    \begin{equation}
        \label{eq:k1}
        \int_{T_1^{-1} (0,x)} g(s) \, ds
        \leq A \biggl(\frac{x}{1 + c x^\gamma}\biggr)^{1-\gamma}
        \leq A ( x^{1-\gamma} - c' x)
        ,
    \end{equation}
    where $c' > 0$ also depends only on $c_\gamma$ and $\gamma$.

    Let now $k \geq 2$. Note that $T_k^{-1}(0,x) \subset (C_\gamma^{-1}, 1)$.
    Observe that if $g \in \cD^1$ with $\int_0^1 g(s) \, ds = 1$, then
    $g(s) \leq C$ for $s \in (C_\gamma^{-1}, 1)$,
    where $C$ depends only on $a_1$ and $\chi_*$.
    Since $T_k$ is uniformly expanding with bounded distortion~\eqref{eq:dist2},
    $|T_k^{-1}(0,x)| \leq C' |B_k| x$ with some $C'$ that depends only on $C_d$.
    Hence
    \begin{equation}
        \label{eq:k2}
        \int_{T_k^{-1} (0,x)} g(s) \, ds
        \leq C C' |B_k| x
        .
    \end{equation}

    Assembling~\eqref{eq:k}, \eqref{eq:k1} and~\eqref{eq:k2}, we have
    \[
        \int_0^x (\cL g)(s) \, ds
        \leq A ( x^{1-\gamma} - c' x) + C'' x
    \]
    with $c', C'' > 0$ independent of $A$, $\eps$ and $h$.
    For each $A \geq C'' / c'$ the right hand side above is
    bounded by $A x^{1-\gamma}$, as desired.
\end{proof}

A useful corollary of Lemma~\ref{lem:branch} is a distortion bound:

\begin{lem}
    \label{lem:dist}
    Let $n > 0$ and $\delta > 0$.
    Consider maps $T_{\eps h_k}$, $1 \leq k \leq n$ with some $\eps$ and $h_k$ as per our assumptions.
    Choose and restrict to a single branch for every $T_{\eps h_k}$, so that all $T_{\eps h_k}$ are invertible
    and $T_{\eps h_k}^{-1}$ is well defined.
    Denote
    \[
        T_n = T_{\eps h_n} \circ \ldots \circ T_{\eps h_1}
        \quad \text{and} \quad
        J_n = 1 / T_n' \circ T_n^{-1}
        .
    \]
    Then
    \begin{equation}
        \label{eq:distJ}
        \frac{|J_n^{(\ell)}|}{J_n}
        \leq \frac{a_\ell}{\chi_\ell}
        \quad \text{for} \quad 1 \leq \ell < r
        , \quad \text{and} \quad
        \frac{\Lip_{J_n^{(r-1)}}}{J_n}
        \leq \frac{a_r}{\chi_r}
        .
    \end{equation}
    In particular, for every $\delta > 0$ the bounds above are uniform in $x \in [\delta,1]$.
\end{lem}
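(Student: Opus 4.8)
The key observation is that $J_n$ is the transfer-operator image of the constant function $1$ under the composition $T_n$, restricted to the chosen branch. More precisely, writing $w_k = 1/T_{\eps h_k}'$ and using the chain rule, $J_n = \bigl(\prod_{k=1}^n w_k \circ (T_{\eps h_{k-1}} \circ \cdots \circ T_{\eps h_1})\bigr) \circ T_n^{-1}$, which is exactly $\cL_{\eps h_n}(1_{B_n} \cdot) \circ \cdots \circ \cL_{\eps h_1}(1_{B_1} \cdot)$ applied to the constant function $1$ on $(0,1)$. So the plan is: first note that the constant function $1$ lies in $\cD^r$ — indeed all its derivatives vanish, so the defining inequalities in~\eqref{eq:cCr} hold trivially for any positive choice of $a_1, \ldots, a_r$, in particular for the ones fixed via Lemma~\ref{lem:Cr}/Lemma~\ref{lem:branch}. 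Then apply Lemma~\ref{lem:branch} $n$ times, each time with a single branch of a single map $T_{\eps h_k}$: since $\cD^r \subset \cD^q$ for all $q \le r$ (the cones are nested) and each single-branch transfer operator preserves $\cD^r$ by Lemma~\ref{lem:branch}, we conclude $J_n \in \cD^r$.

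Unwinding the definition of $\cD^r$ in~\eqref{eq:cCr} for the function $J_n$ gives precisely $|J_n^{(\ell)}|/J_n \le a_\ell / \chi_\ell$ for $1 \le \ell < r$ together with $\Lip_{J_n^{(r-1)}}/J_n \le a_r/\chi_r$, which is exactly~\eqref{eq:distJ}. For the final sentence: since $\chi_\ell(x) = \min\{x^\ell, \chi_*\}$, on the interval $[\delta, 1]$ we have $\chi_\ell(x) \ge \min\{\delta^\ell, \chi_*\}$, a constant depending only on $\delta$, $\ell$, $\chi_*$; hence $|J_n^{(\ell)}(x)| \le (a_\ell/\min\{\delta^\ell,\chi_*\}) J_n(x)$ and likewise for the Lipschitz bound, uniformly in $x \in [\delta,1]$ and in $n$.

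There is essentially no obstacle here — this is a corollary in the strict sense, just an application of Lemma~\ref{lem:branch} to the seed function $1$. The one point that warrants a word of care is that Lemma~\ref{lem:branch} as stated is about a single map $T_{\eps h}$ and a single branch, whereas here we iterate $n$ possibly different maps $T_{\eps h_k}$; but this is harmless, because the conclusion of Lemma~\ref{lem:branch} holds with constants $a_1, \ldots, a_r$ that are uniform over $\eps$, $h$ and the branch (this uniformity is exactly what the hypotheses~\ref{a:r}--\ref{a:d} are set up to provide), so the iteration closes. I would also remark that one could alternatively prove this directly by the same inductive computation as in Lemma~\ref{lem:branch} applied to $(u_\ell w) \circ T_n^{-1}$ with $u_0 = 1$, but routing through Lemma~\ref{lem:branch} avoids repeating the estimate~\eqref{eq:horror}.
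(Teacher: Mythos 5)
Your proposal is correct and coincides with the paper's own argument: the paper likewise observes that $J_n = P_n 1$ where $P_n$ is the composition of the single-branch transfer operators, notes $1 \in \cD^r$, and applies Lemma~\ref{lem:branch} iteratively (with its uniformity in $\eps$, $h$ and the branch) to conclude $J_n \in \cD^r$, from which~\eqref{eq:distJ} is just the definition of $\cD^r$. Your extra remarks (nestedness of the cones, the explicit uniform bound on $[\delta,1]$) are correct but not needed beyond what the paper states.
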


\begin{proof}
    Let $P_{\eps h_k}$ be the transfer operator for $T_{\eps h_k}$, restricted to the chosen branch:
    \[
        P_{\eps h_k} g
        = \frac{g}{T_{\eps h_k}'} \circ T_{\eps h_k}^{-1}
        .
    \]
    Denote $P_n = P_{\eps h_n} \cdots P_{\eps h_1}$.

    Let $g \equiv 1$. Clearly, $g \in \cD^r$.
    By Lemma~\ref{lem:branch}, $P_{\eps h_k} \cD^r \subset \cD^r$ and thus $P_n g \in \cD^r$.
    On the other hand, $P_n g = J_n$, and the desired result follows from the definition of $\cD^k$.
\end{proof}

\subsection{Fixed point and memory loss}

Further let $h_\eps$ be a fixed point of $\cL_\eps$ as in the following lemma;
later we will show that it is unique.

\begin{lem}\label{lem:fix}
    There exists $h_\eps \in \cD^r_1$ such that $\cL_\eps h_\eps = h_\eps$.
\end{lem}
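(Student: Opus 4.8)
The plan is to realise $h_\eps$ as a fixed point of $\cL_\eps$ via the Schauder fixed point theorem, working in the Banach space $L^1[0,1]$ with the convex set $\cD^r_1$ and the map $\cL_\eps h = \cL_{\eps h} h$. Three things must be checked: that $\cD^r_1$ is a nonempty compact convex subset of $L^1$, that $\cL_\eps$ maps $\cD^r_1$ into itself, and that $\cL_\eps$ is continuous on $\cD^r_1$ in the $L^1$ topology.

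I would treat compactness of $\cD^r_1$ first, as the main point. Convexity is clear: $\cD^r_1$ is the intersection of the affine hyperplane $\{\int_0^1 g = 1\}$ with sets cut out by the convex constraints $\pm g^{(\ell)} \le (a_\ell/\chi_\ell)\,g$ for $\ell < r$, $\Lip_{g^{(r-1)}} \le (a_r/\chi_r)\,g$, and $\int_0^x g \le A x^{1-\gamma}$; non-emptiness was arranged in~\eqref{eq:xgamma}; and $\cD^r_1$ is bounded since every $g \in \cD^r_1$ is a probability density, so $\|g\|_{L^1} = 1$. For compactness, Remark~\ref{rmk:xgamma} gives $g(x) \le C x^{-\gamma}$ for $g \in \cD^r_1$, and then the derivative bounds defining $\cD^r$ yield, on every $[\delta,1]$, uniform bounds on $g, g', \dots, g^{(r-1)}$ together with a uniform Lipschitz constant for $g^{(r-1)}$, all depending only on $\delta$ and $\bfC$. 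By Arzel\`{a}--Ascoli and a diagonal argument over $\delta = 1/m \downarrow 0$, any sequence in $\cD^r_1$ has a subsequence converging in $C^{r-1}_{\mathrm{loc}}(0,1]$; the tail bound $\int_0^x g \le A x^{1-\gamma}$ supplies equi-integrability near $0$, so the convergence improves to convergence in $L^1$. Finally one checks the limit $g$ again lies in $\cD^r_1$: the pointwise derivative bounds and the $\Lip$ bound pass to the limit (the latter using continuity of $\chi_r g$ on $(0,1]$), the normalisation and the tail bound pass to the limit by $L^1$-convergence, and $g > 0$ on $(0,1]$ because $|(\log g_n)'| \le a_1/\chi_1$ is uniformly bounded on each $[\delta,1]$, so a limit with $\int_0^1 g = 1$ cannot vanish anywhere on $(0,1]$.

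For the remaining two points: invariance $\cL_\eps(\cD^r_1) \subset \cD^r_1$ is exactly Lemma~\ref{lem:Cr}\ref{lem:Cr:1} with $q = r$, since $\cL_\eps h = \cL_{\eps h} h$. For continuity, fix $h_0, h_1 \in \cD^r_1$; because $r \ge 2$ we have $\cD^r_1 \subset \cD^2_1$, and I would decompose
\[
    \cL_\eps h_0 - \cL_\eps h_1
    = \bigl( \cL_{\eps h_0} h_0 - \cL_{\eps h_1} h_0 \bigr) + \cL_{\eps h_1}(h_0 - h_1).
\]
By assumption~\eqref{eq:Ldiffgamma} applied with $v = h_0$, the first bracket equals $\delta(f_0 - f_1)$ for probability densities $f_0, f_1 \in \cD^1_1$ and $\delta \le |\eps| C_\beta \|h_0 - h_1\|_{L^1}$, hence has $L^1$-norm at most $2|\eps| C_\beta \|h_0 - h_1\|_{L^1}$; the second term has $L^1$-norm at most $\|h_0 - h_1\|_{L^1}$ since $\cL_{\eps h_1}$, being a transfer operator, satisfies $\|\cL_{\eps h_1} u\|_{L^1} \le \|u\|_{L^1}$. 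Therefore $\|\cL_\eps h_0 - \cL_\eps h_1\|_{L^1} \le (1 + 2|\eps| C_\beta)\|h_0 - h_1\|_{L^1}$, which gives the required continuity. Schauder's theorem then produces $h_\eps \in \cD^r_1$ with $\cL_\eps h_\eps = h_\eps$.

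The step I expect to be the main obstacle is the compactness of $\cD^r_1$, and more precisely verifying that the limit of a convergent sequence in $\cD^r_1$ is again in $\cD^r_1$: one must rule out loss of mass to the indifferent fixed point $0$ (handled by the tail bound) and preservation of strict positivity on $(0,1]$ (handled by the log-derivative bound). The rest is routine manipulation of the definitions of $\cD^r$, $\cD^r_1$ and the already-established invariance under $\cL_{\eps h}$.
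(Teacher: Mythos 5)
Your proposal is correct and follows essentially the same route as the paper: invariance of $\cD^r_1$ from Lemma~\ref{lem:Cr}, $L^1$-continuity of $\cL_\eps$ via the splitting into the linear contraction part and the term controlled by assumption~\eqref{eq:Ldiffgamma}, and then the Schauder fixed point theorem on the convex, $L^1$-compact set $\cD^r_1$. The only difference is that you spell out the compactness of $\cD^r_1$ (Arzel\`a--Ascoli on $[\delta,1]$, equi-integrability near $0$ from the tail bound, closedness of the cone constraints), which the paper asserts without proof; your sketch of that step is sound.
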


\begin{proof}
    Suppose that $f,g \in \cD^r_1$. Write
    \[
        \| \cL_{\eps f} f - \cL_{\eps g} g \|_{L^1}
        \leq \| \cL_{\eps f} f - \cL_{\eps f} g \|_{L^1}
        + \| \cL_{\eps f} g - \cL_{\eps g} g \|_{L^1}
        .
    \]
    The first term on the right is bounded by $\|f - g\|_{L^1}$ because $\cL_{\eps f}$
    is a contraction in $L^1$.
    By~\eqref{eq:Ldiffgamma}, so is the second term, up to a multiplicative constant.
    It follows that $\cL_\eps$ is continuous in $L^1$.
    Recall that $\cL_\eps$ preserves $\cD^r_1$ and note that $\cD^r_1$ is compact
    in the $L^1$ topology.
    By the Schauder fixed point theorem, $\cL_\eps$ has a fixed point in $\cD^r_1$.
\end{proof}

Further we use the rates of memory loss for sequential dynamics from~\cite{KL21}:
\begin{thm}
    \label{thm:Juho}
    Suppose that $f,g \in \cD^1_1$ and $h_1, h_2, \ldots$ are probability densities.
    Denote $\cL_n = \cL_{\eps h_n} \cdots \cL_{\eps h_1}$.
    Then
    \[
        \| \cL_n f - \cL_n g \|_{L^1}
        \leq C_1 n^{- 1 / \gamma + 1}
        .
    \]
    More generally, if $f(x),g(x) \leq C_\gamma' x^{-\gamma'}$ with $C_\gamma' > 0$
    and $\gamma' \in [0,\gamma]$, then
    \[
        \| \cL_n f - \cL_n g \|_{L^1}
        \leq C_2 n^{- 1 / \gamma + \gamma' / \gamma}
        .
    \]
    The constant $C_1$ depends only on $\bfC$,
    and $C_2$ depends additionally on $\gamma'$, $C_\gamma'$.
\end{thm}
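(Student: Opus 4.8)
The plan is to obtain Theorem~\ref{thm:Juho} from the sequential memory loss estimates of Korepanov--Lepp\"anen~\cite{KL21}; the substance of the proof is checking that our standing assumptions~\ref{a:r}--\ref{a:d}, together with the cone invariance of Lemma~\ref{lem:Cr} and the distortion bounds of Lemma~\ref{lem:dist}, supply the hypotheses needed there, and that every constant involved depends only on $\bfC$, in particular is \emph{uniform} over the driving sequence $h_1, h_2, \ldots$ and over $\eps \in [-\eps_*, \eps_*]$ (here $\eps$ is merely a fixed parameter; the self-consistency bound~\eqref{eq:Ldiffgamma} plays no role). Equivalently, one can give a self-contained proof by the coupling method, which I outline below; in both routes the two exponents in the statement come out of the same mechanism, and the interesting regime is the intermittent one, $\gamma \in (0,1)$.

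Fix once and for all a reference set $Y \subset (0,1)$ bounded away from $0$ and adapted to the branch structure, e.g.\ $Y = [x_1, 1)$ where $(0, x_1)$ is the leftmost branch; note that $x_1 \geq 1/C_\gamma$ uniformly by~\ref{a:gamma}. For a sequential composition $T_{\eps h_n} \circ \cdots \circ T_{\eps h_1}$ let $\tau_Y(x)$ be the first time the orbit of $x$ visits $Y$. The key quantitative input is the uniform tail bound
\begin{equation*}
    \Leb \{ x \in Y : \tau_Y(x) > n \}
    \leq C n^{-1/\gamma}
    ,
\end{equation*}
with $C$ depending only on $\bfC$. This follows from~\ref{a:gamma}: integrating $T_{\eps h}'(s) \geq 1 + c_\gamma s^\gamma$ on the leftmost branch gives $T_{\eps h}(x) \geq x + \tfrac{c_\gamma}{1+\gamma} x^{1+\gamma}$, so an orbit entering the neighbourhood of $0$ at height $x_0$ escapes back to $Y$ within $O(x_0^{-\gamma})$ iterates (and faster still if $T_{\eps h}'$ is larger), which over-bounds the tail by $n^{-1/\gamma}$ uniformly in the sequence. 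The first-return dynamics to $Y$ inherits a Markov/full-branch structure from the maps being full branch, and its distortion and expansion are uniformly controlled by~\ref{a:d} and by Lemma~\ref{lem:dist} applied on $[x_1, 1]$ (where $\chi_1$ is bounded below).

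With these ingredients the coupling runs as usual. Given $f, g$ as in the statement, one pushes both densities forward and couples mass as orbits return to $Y$; on $Y$ the pushed-forward densities are uniformly regular and bounded above, and bounded below after one return (regularity from Lemma~\ref{lem:dist}, the upper bound because $g(x) \leq C_\gamma' x^{-\gamma'}$ keeps preimages under control), so a definite proportion of the overlapping mass couples at each return. The residual uncoupled mass at time $n$ is then controlled, through the renewal-type estimates of~\cite{KL21}, by the excursion away from $Y$ straddling time $n$. For densities with the ``equilibrium'' singularity $f(x), g(x) \lesssim x^{-\gamma}$, in particular for $f, g \in \cD^1_1$ by Remark~\ref{rmk:xgamma}, this straddling excursion is length-biased, heuristically with tail $\asymp \sum_{k > n} k \cdot k^{-1/\gamma - 1} \asymp n^{1 - 1/\gamma}$, which gives the first bound $C_1 n^{-1/\gamma + 1}$. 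For densities bounded near $0$ (the case $\gamma' = 0$) there is no length-biasing and the residual mass is directly $\lesssim n^{-1/\gamma}$; the intermediate case $f(x), g(x) \lesssim x^{-\gamma'}$ interpolates to $n^{-1/\gamma + \gamma'/\gamma}$, and this is where the dependence of $C_2$ on $\gamma'$ and $C_\gamma'$ enters.

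I expect the main obstacle to be the \emph{uniformity} bookkeeping rather than the coupling scheme itself: each constant entering the tail bound, the distortion and expansion estimates on $Y$, the regularity of the pushed-forward densities, and the coupling proportion must be traced to depend only on $\bfC$, independently of the particular sequence $(h_k)$ and of $\eps$. This is exactly why Lemmas~\ref{lem:Cr} and~\ref{lem:dist} were stated with uniform constants and why~\ref{a:gamma}--\ref{a:d} are postulated uniformly; appealing to~\cite{KL21} is the efficient way to package the renewal-theoretic step, so I would present the proof as a verification that our hypotheses meet those of~\cite{KL21}, with a brief remark translating notation.
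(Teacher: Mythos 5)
Your proposal follows essentially the same route as the paper: the actual proof is exactly a verification that assumptions~\ref{a:r}--\ref{a:d}, the cone invariance of Lemma~\ref{lem:Cr} and the distortion bounds of Lemma~\ref{lem:dist} give the hypotheses (NU:1--NU:7) of~\cite{KL21} with return-time tail $Cn^{-1/\gamma}$ uniformly in the sequence $(h_k)$ and in $\eps$, together with the observation that densities in $\cD^1_1$ (resp.\ bounded by $C_\gamma' x^{-\gamma'}$) give initial tails $n^{-1/\gamma+1}$ (resp.\ $n^{-1/\gamma+\gamma'/\gamma}$), after which one cites~\cite[Theorem~3.8 and Remark~3.9]{KL21}. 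The only cosmetic difference is the choice of inducing set (you take $Y=[x_1,1)$, the paper induces on all of $(0,1)$ with return time defined by hitting a non-leftmost branch); your exponent bookkeeping and uniformity concerns match the paper's.
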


\begin{proof}
    In the language of~\cite{KL21}, the family $T_{\eps h_k}$
    defines a nonstationary nonuniformly expanding dynamical system.
    As a base of ``induction'' we use the whole interval $(0,1)$.
    For a return time of $x \in (0,1)$ corresponding to a sequence $T_{\eps h_k}$, $k \geq n$,
    we take the minimal $j \geq 1$ such that $T_{\eps h_k} \circ \cdots \circ T_{\eps h_{j-1}} (x)$
    belongs to one of the right branches of $T_{\eps h_j}$, i.e.\ not to the leftmost branch.
    Note that we work with the return time which is not a first return time, unlike in~\cite{KL21},
    but this is a minor issue that can be solved by extending the space where the dynamics is defined.

    It is a direct verification that our assumptions and Lemma~\ref{lem:dist}
    verify (NU:1--NU:7) in~\cite{KL21} with tail function
    $h(n) = C n^{-1/\gamma}$ with $C$ depending only on $\gamma$ and $c_\gamma$.

    Further in the language of~\cite{KL21}, functions in $\cD^1_1$ are densities
    of probability measures with a uniform tail bound
    $C n^{-1/\gamma + 1}$, where $C$ depends only on $\bfC$; each $f \in \cD^1_1$ with
    $f(x) \leq C_\gamma' x^{-\gamma'}$ is a density of a probability measure with tail bound
    $C n^{-1/\gamma + \gamma' / \gamma}$ with $C$ depending only on $\bfC$ and $\gamma', C_\gamma'$.

    In this setup, Theorem~\ref{thm:Juho} is a particular case of~\cite[Theorem~3.8 and Remark~3.9]{KL21}.
\end{proof}

Recall that, as a part of assumption~\eqref{eq:Ldiffgamma}, we fixed
$\beta \in \bigl[ 0, \min\{ \gamma, 1 - \gamma \} \bigr)$.

\begin{lem}
    \label{lem:conv}
    There is a constant $C_{\beta, \gamma} > 0$, depending only on $\beta$ and $\gamma$,
    such that if a nonnegative sequence $\delta_n$, $n \geq 0$, satisfies
    \begin{equation}
        \label{eq:conv}
        \delta_n
        \leq \xi n^{ -1 / \gamma + 1}  + \sigma \sum_{j=0}^{n-1} \delta_j ( n - j )^{ -1 / \gamma + \beta / \gamma}
        \quad \text{for all} \quad n > 0
    \end{equation}
    with some $\sigma \in (0, C_{\beta, \gamma}^{-1})$ and $\xi > 0$, then
    \[
        \delta_n \leq \max \Bigl\{ \delta_0, \frac{\xi}{ 1 - \sigma C_{\beta, \gamma} } \Bigr\} \, n^{ -1 / \gamma + 1}
        \quad \text{for all} \quad n > 0
        .
    \]
\end{lem}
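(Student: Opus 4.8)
The plan is to prove the bound by strong induction on $n$, after first establishing the two summability facts that drive the estimate. Write $\alpha = 1/\gamma - 1 > 0$ (so the target rate is $n^{-\alpha}$) and note that the exponent appearing in the kernel is $-1/\gamma + \beta/\gamma = -(1-\beta)/\gamma$; since $\beta < \gamma$ we have $(1-\beta)/\gamma > (1-\gamma)/\gamma = \alpha$, i.e. the kernel decays \emph{strictly faster} than the target rate, and since $\beta < 1-\gamma$ we have $(1-\beta)/\gamma > 1$, so the kernel is summable. These two observations are the whole point. The key quantitative input is a convolution estimate: there is a constant $C_{\beta,\gamma}$, depending only on $\beta$ and $\gamma$, such that
\begin{equation}
    \label{eq:convkernel}
    \sum_{j=1}^{n-1} j^{-\alpha} (n-j)^{-(1-\beta)/\gamma}
    \leq C_{\beta,\gamma} \, n^{-\alpha}
    \quad \text{for all } n \geq 1,
\end{equation}
together with the companion bound for the $j=0$ term, $(n-0)^{-(1-\beta)/\gamma} = n^{-(1-\beta)/\gamma} \leq n^{-\alpha}$ (enlarging $C_{\beta,\gamma}$ if necessary so that it absorbs this term too). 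Estimate~\eqref{eq:convkernel} is standard: split the sum at $j = n/2$; for $j \leq n/2$ bound $(n-j)^{-(1-\beta)/\gamma} \leq (n/2)^{-(1-\beta)/\gamma}$ and use $\sum_{j\geq 1} j^{-\alpha} \leq n^{1-\alpha}/(1-\alpha)$ if $\alpha<1$ (or $\sum j^{-\alpha} < \infty$ if $\alpha > 1$, or a $\log$ factor if $\alpha = 1$ — in every case the product is $O(n^{-\alpha})$ after multiplying by $n^{-(1-\beta)/\gamma+1} \leq n^{-\alpha}\cdot n^{-(1-\beta)/\gamma+1-\alpha+\alpha}$, using $(1-\beta)/\gamma > 1$); for $j \geq n/2$ bound $j^{-\alpha} \leq (n/2)^{-\alpha}$ and use summability of $\sum (n-j)^{-(1-\beta)/\gamma}$. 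One must be slightly careful in the borderline cases $\alpha = 1$ or when $\alpha$ is close to $1$, but because $(1-\beta)/\gamma$ is \emph{strictly} greater than both $1$ and $\alpha$, there is always enough room; I would just cite the elementary fact that for $p, q > 0$ with $p + q > 1$ and $p, q$ not both $\le 1$ in a degenerate way, $\sum_{j=1}^{n-1} j^{-p}(n-j)^{-q} \asymp n^{-\min(p,q)}$ when $\min(p,q) < 1$ and the other exponent exceeds $1$.

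With $C_{\beta,\gamma}$ fixed so that both the convolution bound and the $j=0$ term are controlled, let $\sigma \in (0, C_{\beta,\gamma}^{-1})$ and set $M = \max\{\delta_0, \xi/(1 - \sigma C_{\beta,\gamma})\}$; note $M \geq \delta_0$ and $\xi \leq M(1-\sigma C_{\beta,\gamma})$, equivalently $\xi + \sigma C_{\beta,\gamma} M \leq M$. We prove $\delta_n \leq M n^{-\alpha}$ for all $n \geq 1$ by strong induction (the case $n=0$ being the definition of $M$, interpreting $n^{-\alpha}$ appropriately or simply starting the induction at $n=1$ with $\delta_0 \le M$ available). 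Assume $\delta_j \leq M j^{-\alpha}$ for $1 \leq j \leq n-1$ and $\delta_0 \leq M$. Plug into~\eqref{eq:conv}:
\begin{align*}
    \delta_n
    &\leq \xi n^{-\alpha}
        + \sigma \delta_0 \, n^{-(1-\beta)/\gamma}
        + \sigma \sum_{j=1}^{n-1} M j^{-\alpha} (n-j)^{-(1-\beta)/\gamma} \\
    &\leq \xi n^{-\alpha}
        + \sigma M \, n^{-\alpha}
        + \sigma M \, C_{\beta,\gamma}' \, n^{-\alpha},
\end{align*}
where I have written $C_{\beta,\gamma}'$ for the convolution constant in~\eqref{eq:convkernel} and used $n^{-(1-\beta)/\gamma} \leq n^{-\alpha}$ for the $j=0$ term; after folding the two into a single $C_{\beta,\gamma}$ (replace $C_{\beta,\gamma}$ by $1 + C_{\beta,\gamma}'$ from the outset), the bracketed constant is $\xi + \sigma M C_{\beta,\gamma} \leq M$ by the choice of $M$, giving $\delta_n \leq M n^{-\alpha}$ and closing the induction.

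The only real obstacle is getting~\eqref{eq:convkernel} with a constant that depends on $\beta, \gamma$ alone and no hidden dependence on $n$ — in particular handling the regime where $\alpha = 1/\gamma - 1$ is near or equal to $1$ without the split producing a spurious logarithm at the target rate. This is resolved precisely by the hypothesis $\beta < \min\{\gamma, 1-\gamma\}$: the inequality $\beta < 1 - \gamma$ forces $(1-\beta)/\gamma > 1$ so the kernel is genuinely summable, and $\beta < \gamma$ forces $(1-\beta)/\gamma > \alpha$ so the kernel is asymptotically negligible against $n^{-\alpha}$; with both strict, the elementary convolution lemma applies with room to spare and no boundary case bites. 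Everything else is bookkeeping: choosing $C_{\beta,\gamma}$ before $\sigma$, absorbing the $j=0$ term, and running the induction.
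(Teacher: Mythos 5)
Your proof is correct and follows essentially the same route as the paper: fix a convolution constant $C_{\beta,\gamma}$ via a bound of the form $\sum_{j} (j+1)^{-1/\gamma+1}(n-j)^{-1/\gamma+\beta/\gamma} \leq C_{\beta,\gamma}\,(n+1)^{-1/\gamma+1}$, then run a strong induction with $K=\max\{\delta_0,\ \xi/(1-\sigma C_{\beta,\gamma})\}$ so that $\xi+\sigma C_{\beta,\gamma}K\leq K$. The only differences are cosmetic: you treat the $j=0$ term separately and enlarge the constant, and you additionally sketch the split-at-$n/2$ proof of the convolution inequality, which the paper simply asserts.
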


\begin{proof}
    We choose $C_{\beta,\gamma}$ which makes the following inequality true for all $n$:
    \begin{equation}
        \label{eq:Cbg}
        \sum_{j=0}^{n-1} (j + 1)^{ -1 / \gamma + 1} (n - j)^{ -1 / \gamma + \beta / \gamma }
        \leq C_{\beta, \gamma} (n + 1)^{ -1 / \gamma + 1 }
        .
    \end{equation}
    Let $K = \max \{ \delta_0, \xi / ( 1 - \sigma C_{\beta, \gamma} ) \}$. Then $\delta_0 \leq K$, and
    if $\delta_j \leq K (j + 1)^{ - 1 / \gamma + 1 }$ for all $j < n$, then by~\eqref{eq:conv} and~\eqref{eq:Cbg},
    \[
        \delta_n
        \leq \bigl( \xi + \sigma C_{\beta, \gamma} K \bigr) (n + 1)^{ -1 / \gamma + 1 }
        \leq K (n + 1)^{ -1 / \gamma + 1 }
        .
    \]
    It follows by induction that this bound holds for all $n$.
\end{proof}

\begin{prop}
    \label{prop:mix}
    Let $\tcD^2_1$ be as in Theorem~\ref{thm:acim}.
    There is $\eps_0 > 0$ and $C > 0$ such that for all $f,g \in \tcD^2_1$ and $\eps \in [-\eps_0, \eps_0]$,
    \[
        \bigl\| \cL_\eps^n f - \cL_\eps^n g \bigr\|_{L^1}
        \leq C n^{-1/\gamma + 1}
        .
    \]
\end{prop}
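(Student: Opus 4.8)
The plan is to compare $\cL_\eps^n f$ and $\cL_\eps^n g$ by introducing the frozen sequential dynamics along the orbit of one of them, so that Theorem~\ref{thm:Juho} can be applied and the error terms coming from the self-consistency are controlled by assumption~\eqref{eq:Ldiffgamma}. Concretely, set $f_n = \cL_\eps^n f$ and $g_n = \cL_\eps^n g$, and let $h_n := g_{n-1}$ be the densities that appear when unfolding $\cL_\eps^n g$ as a product $\cL_n^g := \cL_{\eps g_{n-1}} \cdots \cL_{\eps g_0}$. The key identity is the telescoping decomposition
\begin{equation*}
    \cL_\eps^n f - \cL_\eps^n g
    = \cL_n^g f - \cL_n^g g
    + \sum_{j=1}^{n} \cL_{\eps g_{n-1}} \cdots \cL_{\eps g_{j}} \bigl( \cL_{\eps f_{j-1}} - \cL_{\eps g_{j-1}} \bigr) f_{j-1}
    .
\end{equation*}
Here the first term is pure sequential memory loss for the frozen sequence $h_k = g_{k-1}$, and each summand replaces, at step $j$, one copy of $\cL_{\eps f_{j-1}}$ by $\cL_{\eps g_{j-1}}$ acting on the already-evolved density $f_{j-1}$.

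Next I would estimate each piece. Since $f, g \in \tcD^2_1$ we have in particular $f, g \in \cD^1_1$ (the $\tcD$ constants differ but this only changes the bound $f(x) \le Cx^{-\gamma}$, cf.\ Remark~\ref{rmk:xgamma}), so Theorem~\ref{thm:Juho} gives $\| \cL_n^g f - \cL_n^g g \|_{L^1} \le C_1 n^{-1/\gamma+1}$. For the $j$-th summand: by assumption~\eqref{eq:Ldiffgamma}, because $f_{j-1} \in \cD^2_1$ (invariance of $\cD^2_1$ under each $\cL_{\eps h}$ from Lemma~\ref{lem:Cr}, noting $\cL_\eps^{j-1} f$ is such a composition applied to $f$ — here I use that $f_{j-1}$ eventually lands in $\cD^2_1$; one subtlety is that $f\in\tcD^2_1$ not $\cD^2_1$, so I would apply \eqref{eq:Ldiffgamma} to $v = f_{j-1}$ only after observing $f_1 = \cL_{\eps f_0} f_0 \in \cD^2_1$ by the invariance, hence replace $f_{j-1}$ by this from $j\ge 2$ and treat $j=1$ separately using $f_0 \in \tcD^2_1$ and a $\tcD$-version of \eqref{eq:Ldiffgamma}), we get
\begin{equation*}
    \bigl( \cL_{\eps f_{j-1}} - \cL_{\eps g_{j-1}} \bigr) f_{j-1}
    = \delta_j^\ast (\phi_j - \psi_j),
    \qquad \phi_j, \psi_j \in \cD^1_1, \quad \phi_j(x), \psi_j(x) \le C_\beta x^{-\beta},
\end{equation*}
with $\delta_j^\ast \le |\eps| C_\beta \| f_{j-1} - g_{j-1} \|_{L^1} = |\eps| C_\beta \delta_{j-1}$ where $\delta_m := \| \cL_\eps^m f - \cL_\eps^m g \|_{L^1}$. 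Applying the remaining $n-j$ operators and using the second, sharper bound of Theorem~\ref{thm:Juho} with $\gamma' = \beta$ (legitimate since $\beta \in [0,\gamma)$ and $\phi_j,\psi_j \in \cD^1_1$ have the stated $x^{-\beta}$ bound), each summand is at most $\delta_j^\ast \cdot C_2 (n-j)^{-1/\gamma + \beta/\gamma}$, and for $j = n$ the trivial bound $\delta_n^\ast \le |\eps| C_\beta \delta_{n-1}$ (operator norm one) is used.

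Collecting everything yields precisely the recursive inequality
\begin{equation*}
    \delta_n
    \le C_1 n^{-1/\gamma + 1}
    + |\eps| C_\beta C_2 \sum_{j=1}^{n} \delta_{j-1} (n-j)^{-1/\gamma + \beta/\gamma}
\end{equation*}
(with the $j=n$ term interpreted via the boundary convention), which is exactly the hypothesis~\eqref{eq:conv} of Lemma~\ref{lem:conv} with $\xi = C_1$ and $\sigma = |\eps| C_\beta C_2$. Choosing $\eps_0$ small enough that $|\eps_0| C_\beta C_2 < C_{\beta,\gamma}^{-1}$, Lemma~\ref{lem:conv} gives $\delta_n \le \max\{\delta_0, C_1/(1-\sigma C_{\beta,\gamma})\} \, n^{-1/\gamma+1}$, and since $\delta_0 = \|f - g\|_{L^1} \le 2$ is bounded, this is the claimed bound with $C$ depending only on $\bfC$ and $\tA, \ta_1, \ta_2$ (the latter entering through $C_1, C_2$ and the first-step $j=1$ estimate).

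The main obstacle I anticipate is bookkeeping the domains carefully: $f$ and $g$ start in $\tcD^2_1$, which is \emph{not} assumed invariant, so \eqref{eq:Ldiffgamma} — stated for $v \in \cD^2_1$ — cannot be applied directly to $f_{j-1}$ for small $j$. The clean fix is to note that $\cL_{\eps h}$ maps $\tcD^2_1$ into $\cD^2_1$ once the constants are taken large enough (or, more robustly, to run the telescoping so that the perturbation at step $j$ hits $\cL_\eps^{j-1}f$ only for $j \ge 2$, with $j=1$ handled by a $\tcD$-version of \eqref{eq:Ldiffgamma} whose constants get absorbed into $C$), and to verify that the constant in Theorem~\ref{thm:Juho}'s second bound is uniform over $\phi_j, \psi_j$ because it depends only on $\bfC$, $\gamma'=\beta$, and $C'_\gamma = C_\beta$ — all fixed. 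Everything else is a routine assembly of the estimates above into the form required by Lemma~\ref{lem:conv}.
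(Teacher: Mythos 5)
Your overall architecture is the paper's: freeze one orbit to get sequential operators, telescope the difference, control the memory-loss term by Theorem~\ref{thm:Juho} and the self-consistency error by~\eqref{eq:Ldiffgamma}, and close the recursion with Lemma~\ref{lem:conv}. The gap is exactly at the point you flag but do not resolve: the mismatch between $\tcD^2_1$ and $\cD^2_1$. Two steps fail as written. First, $\tcD^2_1 \subset \cD^1_1$ is false in general (the constants $\tA, \ta_1$ may exceed $A, a_1$), so Theorem~\ref{thm:Juho} cannot be applied to the pair $(f,g)$ for the frozen-orbit term; its hypothesis is membership in $\cD^1_1$ with the fixed cone constants, not just a pointwise bound $f(x)\leq Cx^{-\gamma}$. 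Second, your telescoping puts the operator difference on $v=f_{j-1}$, and~\eqref{eq:Ldiffgamma} is only assumed for $v\in\cD^2_1$; since $\tcD^2_1$ is not invariant, $f_{j-1}$ is not known to lie in $\cD^2_1$ for \emph{any} $j$. Your proposed fixes do not hold up: Lemma~\ref{lem:Cr} fixes $a_1,\ldots,a_r, A$ once and for all, before $\tA,\ta_1,\ta_2$ are given, and provides no statement that a single application of $\cL_{\eps h}$ maps $\tcD^2_1$ into $\cD^2_1$ (nor that iterates ``eventually land'' there); and a ``$\tcD$-version of~\eqref{eq:Ldiffgamma}'' is simply not among the standing assumptions, so invoking it changes the hypotheses of the theorem.

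The missing idea is how the paper sidesteps both problems simultaneously. One first reduces, by the triangle inequality, to the case $g(x)=(1-\gamma)x^{-\gamma}$, which lies in $\cD^2_1$ by~\eqref{eq:xgamma}; then one freezes along the $f$-orbit and telescopes so that $(\cL_{\eps f_j}-\cL_{\eps g_j})$ acts on $g_j=\cL_\eps^j g$, which stays in $\cD^2_1$ by invariance, so~\eqref{eq:Ldiffgamma} applies at every step with constants from $\bfC$ only. The remaining term, $\cL_{\eps f_{n-1}}\cdots\cL_{\eps f_0}f-\cL_{\eps f_{n-1}}\cdots\cL_{\eps f_0}g$, is handled by convexity: choose $\xi$ (depending on $\tA,\ta_1,\ta_2$ and the fact that $g$ is ``deep inside'' the cone) so that $(f+\xi g)/(\xi+1)\in\cD^2_1$, write the term as $(\xi+1)\bigl[\cL_{\eps f_{n-1}}\cdots\cL_{\eps f_0}\frac{f+\xi g}{\xi+1}-\cL_{\eps f_{n-1}}\cdots\cL_{\eps f_0}g\bigr]$, and apply Theorem~\ref{thm:Juho} only to densities genuinely in $\cD^1_1$. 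With that reorganisation your recursion and the application of Lemma~\ref{lem:conv} (your $j=n$ boundary issue is harmless, the shifted weights are comparable up to a constant) go through; without it, the two central estimates of your proof are not justified under the paper's assumptions.
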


\begin{proof}
    Without loss of generality, suppose that $g (x) = (1 - \gamma) x^{-\gamma}$.
    By~\eqref{eq:xgamma}, $g \in \cD^2_1$.
    Choose $\xi > 0$ large enough so that $(f + \xi g) / (\xi + 1) \in \cD^2_1$.
    Such $\xi$ exists by Remark~\ref{rmk:xgamma},~\eqref{eq:xgamma} and the definition of $\cD^2_1$;
    it depends only on $A, a_1, a_2$ and $\tA, \ta_1, \ta_2$.

    Denote $f_n = \cL_\eps^n f$ and $g_n = \cL_\eps^n g$.
    Write $f_n - g_n = A_n + B_n$, where
    \begin{align*}
        A_n
        & = (\xi + 1) \Bigl( \cL_{\eps f_{n-1}} \cdots \cL_{\eps f_0} \frac{h + \xi g}{\xi + 1}
        - \cL_{\eps f_{n-1}} \cdots \cL_{\eps f_0} g \Bigr)
        , \\
        B_n
        & = \cL_{\eps f_{n-1}} \cdots \cL_{\eps f_0} g - \cL_{\eps g_{n-1}} \cdots \cL_{\eps g_0} g
        \\
        & = \sum_{j=0}^{n-1} \cL_{\eps f_{n-1}} \cdots \cL_{\eps f_{j+1}} (\cL_{\eps f_j} - \cL_{\eps g_j})
        \cL_{\eps g_{j-1}} \cdots \cL_{\eps g_0} g
        .
    \end{align*}
    By the invariance of $\cD^2_1$, assumption~\eqref{eq:Ldiffgamma} and Theorem~\ref{thm:Juho},
    \begin{align*}
        \|A_n\|_{L^1}
        & \leq C' (\xi + 2) n^{-1/\gamma + 1}
        ,\\
        \|B_n\|_{L^1}
        &\leq C' |\eps| \sum_{j=0}^{n-1} \| f_j - g_j \|_{L^1} (n-j)^{-1/\gamma + \beta / \gamma}
        .
    \end{align*}
    Here $C'$ depends only on $\bfC$.
    Let $\delta_n = \| f_n - g_n \|_{L^1}$. Then
    \[
        \delta_n
        \leq \|A_n\|_{L^1} + \|B_n\|_{L^1}
        \leq C' n^{ -1 / \gamma + 1}  + C' |\eps| \sum_{j=0}^{n-1} \delta_j ( n - j )^{ -1 / \gamma + \beta / \gamma}
        .
    \]
    By Lemma~\ref{lem:conv}, $\delta_n \leq \max \{2, C' (1 - |\eps| C' C_{\beta,\gamma})^{-1} \} n^{ -1 / \gamma + 1 }$ for all $n > 0$,
    provided that $|\eps| C' < C_{\beta, \gamma}^{-1}$.
\end{proof}

\begin{lem}
    \label{lem:inD}
    Suppose that $f$ is a probability density on $[0,1]$. For every $\delta > 0$ there exist $n \geq 0$
    and $g \in \cD_1^r$ such that $\| \cL_\eps^n f - g \|_{L^1} \leq \delta$.
\end{lem}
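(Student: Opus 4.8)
The plan is to approximate $f$ by a density with a polynomial tail bound at $0$ and then reduce to the memory loss estimate of Theorem~\ref{thm:Juho}, exploiting the invariance of $\cD^r_1$. First I would record the identity $\cL_\eps^n f = P_n f$, where $f_j = \cL_\eps^j f$ and $P_n = \cL_{\eps f_{n-1}} \cdots \cL_{\eps f_0}$; this follows by induction from $\cL_\eps h = \cL_{\eps h} h$, as in the proof of Proposition~\ref{prop:mix}. Each $f_j$ is a probability density, each $\cL_{\eps h}$ is a positive $L^1$-contraction, and hence so is $P_n$. Set $g_0(x) = (1-\gamma) x^{-\gamma}$, which lies in $\cD^r_1$ by~\eqref{eq:xgamma}, and take the candidate to be $g := P_n g_0$; by invariance of $\cD^r_1$ under each $\cL_{\eps h}$ (Lemma~\ref{lem:Cr}), $g \in \cD^r_1$ for every $n$.

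Given $\delta > 0$, I would next choose a probability density $\tilde f$ with $\tilde f(x) \le C_\gamma' x^{-\gamma}$ for some $C_\gamma' > 0$ and $\| f - \tilde f \|_{L^1} < \delta/2$. Since $x^{-\gamma} \ge 1$ on $(0,1]$, this is achieved by truncation: with $\tilde f_0 = \min\{ f, C' x^{-\gamma} \}$ one has $\| f - \tilde f_0 \|_{L^1} \le \int_{\{ f > C' \}} f \to 0$ as $C' \to \infty$ by dominated convergence, so $\tilde f := \tilde f_0 / \int \tilde f_0$ works for $C'$ large. Writing $\cL_\eps^n f - g = P_n(f - \tilde f) + P_n(\tilde f - g_0)$, the first term has $L^1$-norm at most $\| f - \tilde f \|_{L^1} < \delta/2$. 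For the second, I would apply Theorem~\ref{thm:Juho} with the sequence $h_k = f_{k-1}$ (so that $\cL_n = P_n$) and with $\gamma' = \gamma$, using that $\tilde f$ and $g_0$ are probability densities bounded by a multiple of $x^{-\gamma}$, to get $\| P_n(\tilde f - g_0) \|_{L^1} \le C_2 n^{-1/\gamma + 1} \to 0$. Taking $n$ large enough that this is below $\delta/2$ finishes the proof.

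The step needing care is the approximation: one cannot in general approximate an $L^1$ density by an element of $\cD^r_1$ itself, both because of the tail constraint $\int_0^x g \le A x^{1-\gamma}$ at the indifferent point and because of the $C^r$ regularity built into the cone. The point is that the general form of Theorem~\ref{thm:Juho} only uses the pointwise bound $\tilde f(x) \le C_\gamma' x^{-\gamma}$, not membership in a cone, and such a bound is always attainable in $L^1$ by truncation precisely because $f$ is integrable. The remaining ingredients — the $L^1$-contraction property of transfer operators, the invariance of $\cD^r_1$, and the identity $\cL_\eps^n f = P_n f$ — are routine and already appear in this section.
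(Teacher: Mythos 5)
Your overall architecture (write $\cL_\eps^n f = P_n f$ with the sequential operators driven by the orbit $f_j$, take $g = P_n(\text{cone element}) \in \cD^r_1$ by invariance, and split off the approximation error using the $L^1$-contraction property) matches the paper's proof. However, the central step has a genuine gap: you apply Theorem~\ref{thm:Juho} to the truncated density $\tilde f = \min\{f, C' x^{-\gamma}\}$ (normalised), claiming that ``the general form of Theorem~\ref{thm:Juho} only uses the pointwise bound $\tilde f(x) \le C_\gamma' x^{-\gamma}$, not membership in a cone.'' That is a misreading. The hypothesis $f,g \in \cD^1_1$ governs both assertions of Theorem~\ref{thm:Juho}; the ``more generally'' clause only adds a pointwise bound on top of cone membership so as to improve the tail exponent (the first bound is the case $\gamma' = \gamma$, since $\cD^1_1$ already forces $f \lesssim x^{-\gamma}$ by Remark~\ref{rmk:xgamma}). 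This is visible in the proof of Theorem~\ref{thm:Juho}, which treats ``each $f \in \cD^1_1$ with $f(x) \le C_\gamma' x^{-\gamma'}$'' as a density that is regular in the sense of \cite{KL21}: the coupling machinery there needs the log-derivative control $|f'|/f \le a_1/\chi_1$ supplied by $\cD^1_1$, not just a tail/pointwise bound. A truncation of an arbitrary $L^1$ density has no regularity whatsoever, so the estimate $\|P_n(\tilde f - g_0)\|_{L^1} \le C_2 n^{-1/\gamma+1}$ is not justified by anything in the paper, and your argument collapses at exactly the point you flagged as ``the step needing care.''

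The paper circumvents this precisely where your truncation fails: it approximates $f$ in $L^1$ by a $C^\infty$ density $\tf$ and then pushes $\tf$ into the cone by the shift trick, choosing $C$ large so that $(\tf + C)/(C+1) \in \cD^r_1$ (possible because $\tf$ has bounded derivatives, so $|\tf^{(\ell)}|/(\tf + C)$ is small). By linearity of the sequential operators, $\cL_k \tf - \cL_k 1 = (C+1)\bigl[\cL_k\bigl((\tf + C)/(C+1)\bigr) - \cL_k 1\bigr]$, and now both arguments lie in the cone, so the memory-loss estimate (Proposition~\ref{prop:mix}/Theorem~\ref{thm:Juho} along the sequence $f_j$) applies and the candidate is $g = \cL_n 1$. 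If you replace your truncation step by this smoothing-plus-shift device (or otherwise produce an approximant that genuinely lies in $\cD^1_1$, or is a bounded multiple of a cone element plus a small $L^1$ error), the rest of your argument goes through.
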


\begin{proof}
    Denote $f_k = \cL_\eps^k f$ and $\cL_k = \cL_{\eps f_0} \cdots \cL_{\eps f_{k-1}}$.
    Let $\tf$ be a $C^\infty$ probability density with $\|f - \tf\|_{L^1} \leq \delta / 2$.
    It exists because $C^\infty$ is dense in $L^1$.
    Then for all $k$,
    \[
        \| \cL_k f - \cL_k \tf \|_{L^1}
        \leq \| f - \tf \|_{L^1}
        \leq \delta / 2
        .
    \]
    Choose $C \geq 0$ large enough so that $(\tf + C) / (C + 1) \in \cD_1^r$. Write
    \[
        \cL_k \tf - \cL_k 1
        = (C + 1) \Bigl[ \cL_k \Bigl( \frac{\tf + C}{C+1} \Bigr) - \cL_k 1 \Bigr]
        .
    \]
    By Proposition~\ref{prop:mix}, the right hand side above converges to $0$, in particular
    $\|\cL_n \tf - \cL_n 1\|_{L^1} \leq \delta / 2$ for some $n$.

    Take $g = \cL_n 1$. Then $g \in \cD_1^r$ by the invariance of $\cD_1^r$,
    and $\| \cL_\eps^n f - g \| \leq \delta$ by construction.
\end{proof}

\begin{prop}
    \label{prop:physical}
    Suppose that $f$ is a probability density on $[0,1]$.
    There is $\eps_0 > 0$ such that for all $\eps \in [-\eps_0, \eps_0]$,
    \[
        \lim_{n \to \infty} \| \cL_\eps^n f - h_\eps \|_{L^1}
        = 0
        .
    \]
\end{prop}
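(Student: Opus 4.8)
The plan is to combine Lemma~\ref{lem:inD} (which says that after finitely many steps the orbit of any probability density is $L^1$-close to the invariant cone $\cD^r_1$) with the contraction estimate of Proposition~\ref{prop:mix} (which gives polynomial decay of $\|\cL_\eps^n u - \cL_\eps^n v\|_{L^1}$ for $u,v$ in a cone of the $\tcD^2_1$ type), applied with the fixed point $h_\eps$ from Lemma~\ref{lem:fix} as one of the two densities. The key point is that $h_\eps \in \cD^r_1 \subset \cD^2_1$, so $h_\eps$ itself is an admissible comparison density for Proposition~\ref{prop:mix}.

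First I would fix $\eps_0 > 0$ small enough that both Lemma~\ref{lem:inD} and Proposition~\ref{prop:mix} apply; note $\cD^r_1 \subset \cD^2_1$, so Proposition~\ref{prop:mix} with $\tcD^2_1 = \cD^2_1$ yields a constant $C > 0$ with $\|\cL_\eps^n u - \cL_\eps^n v\|_{L^1} \leq C n^{-1/\gamma + 1}$ for all $u, v \in \cD^2_1$. Next, given $\delta > 0$, apply Lemma~\ref{lem:inD} to obtain $m \geq 0$ and $g \in \cD^r_1$ with $\|\cL_\eps^m f - g\|_{L^1} \leq \delta$. Then for $n \geq m$, write
\[
    \|\cL_\eps^n f - h_\eps\|_{L^1}
    \leq \|\cL_\eps^{n-m}(\cL_\eps^m f) - \cL_\eps^{n-m} g\|_{L^1}
    + \|\cL_\eps^{n-m} g - h_\eps\|_{L^1}
    .
\]
For the first term I would use that $\cL_\eps^{n-m}$ is built from the $L^1$-contractions $\cL_{\eps f_k}$ (each $\cL_{\eps h}$ is a genuine $L^1$ contraction, being a transfer operator of a probability-preserving map), so it does not expand $L^1$ distance; hence this term is $\leq \|\cL_\eps^m f - g\|_{L^1} \leq \delta$. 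For the second term, since $g \in \cD^r_1 \subset \cD^2_1$ and $h_\eps \in \cD^r_1 \subset \cD^2_1$ with $\cL_\eps h_\eps = h_\eps$, Proposition~\ref{prop:mix} gives $\|\cL_\eps^{n-m} g - \cL_\eps^{n-m} h_\eps\|_{L^1} \leq C (n-m)^{-1/\gamma + 1} \to 0$ as $n \to \infty$. Therefore $\limsup_{n\to\infty} \|\cL_\eps^n f - h_\eps\|_{L^1} \leq \delta$, and since $\delta$ was arbitrary the limit is $0$. As a byproduct, taking $f \in \cD^r_1$ and $m = 0$ shows $h_\eps$ is the unique fixed point of $\cL_\eps$ among probability densities.

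The only subtlety I anticipate is making sure the iterated self-consistent operator $\cL_\eps^{n-m}$ applied to $\cL_\eps^m f$ really is a composition of linear $L^1$-contractions with a common prefactor matching the one applied to $g$; this is not automatic because $\cL_\eps$ is nonlinear and the linear operators $\cL_{\eps f_k}$ depend on the orbit of $f$, not of $g$. The resolution is exactly the decomposition used in Lemma~\ref{lem:inD}: along the orbit of $\cL_\eps^m f$ the maps are $\cL_{\eps f_{m+k}}$, and $\cL_\eps^{n-m} u = \cL_{\eps f_{n-1}} \cdots \cL_{\eps f_m}\, u$ for $u = \cL_\eps^m f$ while comparing to the genuinely iterated $\cL_\eps^{n-m} g$ requires instead bounding $\|\cL_\eps^n f - \cL_\eps^n g\|_{L^1}$ directly; but this is precisely a special case of the $A_n + B_n$ splitting in Proposition~\ref{prop:mix} once we know $\cL_\eps^m f$ is $\delta$-close to a point $g$ of the invariant cone. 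So in fact the cleanest route is: bound $\|\cL_\eps^n f - \cL_\eps^{n-m} g\|_{L^1}$ via one step of $L^1$-nonexpansion of the linear cocycle along the $f$-orbit (valid since the first factor is linear in its argument), then bound $\|\cL_\eps^{n-m} g - h_\eps\|_{L^1}$ by Proposition~\ref{prop:mix}. I would spell out that first nonexpansion step carefully, as it is the one place where linearity of each $\cL_{\eps h}$ (as opposed to $\cL_\eps$) is used.
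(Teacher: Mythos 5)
Your overall architecture (use Lemma~\ref{lem:inD} to get within $\delta$ of the cone, then compare orbits, then apply Proposition~\ref{prop:mix} against $h_\eps \in \cD^r_1 \subset \cD^2_1$) matches the paper's, but the middle step has a genuine gap. You bound $\|\cL_\eps^{n-m}(\cL_\eps^m f) - \cL_\eps^{n-m} g\|_{L^1} \leq \delta$ by ``nonexpansion'', but $\cL_\eps^{n-m}(\cL_\eps^m f) = \cL_{\eps f_{n-1}} \cdots \cL_{\eps f_m} (\cL_\eps^m f)$ while $\cL_\eps^{n-m} g = \cL_{\eps g_{n-m-1}} \cdots \cL_{\eps g_0}\, g$ with $g_k = \cL_\eps^k g$: these are compositions of \emph{different} linear operators, one driven by the $f$-orbit and the other by the $g$-orbit, so $L^1$-contraction of each factor gives nothing; the nonlinear operator $\cL_\eps$ is not known to be nonexpansive, and establishing an approximate version of this is exactly the content of the proposition. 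You do notice the issue, but your proposed fix --- that bounding $\|\cL_\eps^n f - \cL_\eps^n g\|_{L^1}$ is ``precisely a special case of the $A_n+B_n$ splitting in Proposition~\ref{prop:mix}'' --- does not work either: that splitting requires \emph{both} densities to lie in a cone of type $\tcD^2_1$ (it needs $(f+\xi g)/(\xi+1) \in \cD^2_1$, and cone membership along the orbit so that assumption~\eqref{eq:Ldiffgamma} applies), whereas $\cL_\eps^m f$ is only $L^1$-close to the cone and in general never belongs to any such cone, since the transfer operator does not smooth an arbitrary $L^1$ density.

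What is needed, and what the paper does, is a modified splitting: write $f_n - \tf_n = A_n + B_n$, where $A_n$ is the \emph{same} cocycle $\cL_{\eps f_{n-1}} \cdots \cL_{\eps f_0}$ applied to $f$ and to the cone element $\tf$ (this is the only place linearity and $L^1$-contraction are used, giving $\|A_n\|_{L^1} \leq \delta$), and $B_n$ is the telescoping difference of the two cocycles applied to $\tf$, estimated via~\eqref{eq:Ldiffgamma} and Theorem~\ref{thm:Juho} by $C'' |\eps| \max_{j<n} \|f_j - \tf_j\|_{L^1}$ (using that $-1/\gamma + \beta/\gamma < -1$ makes the kernel summable). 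One then absorbs this term for $C''|\eps| < 1$ to conclude $\sup_n \|f_n - \tf_n\|_{L^1} \leq \delta/(1 - C''|\eps|)$, and only then invokes Proposition~\ref{prop:mix} for $\|\tf_n - h_\eps\|_{L^1} \to 0$. This absorption step is where an additional smallness condition on $\eps_0$ enters; your argument never uses smallness of $\eps$ in the comparison step, which is a symptom of the missing estimate.
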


\begin{proof}
    Choose a small $\delta > 0$.
    Without loss of generality, suppose that $\|f - \tf\|_{L^1} \leq \delta$ with $\tf \in \cD_1^1$.
    (The general case is recovered using Lemma~\ref{lem:inD} and replacing $f$ with $\cL_\eps^n f$
    with sufficiently large $n$.)

    As in the proof of Proposition~\ref{prop:mix}, denote $f_n = \cL_\eps^n f$ and $\tf_n = \cL_\eps^n \tf$,
    and write $f_n - \tf_n = A_n + B_n$, where
    \begin{align*}
        A_n
        & = \cL_{\eps f_{n-1}} \cdots \cL_{\eps f_0} f - \cL_{\eps f_{n-1}} \cdots \cL_{\eps f_0} \tf
        , \\
        B_n
        & = \cL_{\eps f_{n-1}} \cdots \cL_{\eps f_0} \tf - \cL_{\eps \tf_{n-1}} \cdots \cL_{\eps \tf_0} \tf
        \\
        & = \sum_{j=0}^{n-1} \cL_{\eps f_{n-1}}  \cdots \cL_{\eps f_{j+1}}
        (\cL_{\eps f_j} - \cL_{\eps \tf_j}) \cL_{\eps \tf_{j-1}} \cdots \cL_{\eps \tf_0} \tf
        .
    \end{align*}
    Since all $\cL_{\eps f_j}$ are contractions in $L^1$,
    \begin{equation}
        \label{eq:Afh}
        \|A_n\|_{L^1}
        \leq \|f - \tf\|_{L^1}
        \leq \delta
        .
    \end{equation}
    By~\eqref{eq:Ldiffgamma} and Theorem~\ref{thm:Juho},
    \begin{equation}
        \label{eq:Bfh}
        \begin{aligned}
            \|B_n\|_{L^1}
            & \leq C' |\eps| \sum_{j=0}^{n-1} \|f_j - \tf_j\|_{L^1} (n-j)^{-1/\gamma + \beta / \gamma}
            \\
            & \leq C'' |\eps| \max_{j < n} \|f_j - \tf_j\|_{L^1}
            ,
        \end{aligned}
    \end{equation}
    where $C'$ depends only on $\bfC$
    and $C'' = C' \sum_{j=1}^{\infty} j^{-1/\gamma + \beta / \gamma}$;
    recall that $-1/\gamma + \beta / \gamma < -1$, so this sum is finite.
    
    From~\eqref{eq:Afh} and~\eqref{eq:Bfh},
    \[
        \| f_n - \tf_n \|_{L^1}
        \leq \|A_n\|_{L^1} + \|B_n\|_{L^1}
        \leq \delta + C'' |\eps| \max_{j < n} \| f_j - \tf_j \|_{L^1}
        .
    \]
    Hence if $\eps$ is sufficiently small so that $C'' |\eps| < 1$, then
    \[
        \| f_n - \tf_n \|_{L^1} \leq \delta / (1 - C'' |\eps|)
        \quad \text{for all} \quad n
        .
    \]
    Since $\delta > 0$ is arbitrary, $\| f_n - \tf_n \|_{L^1} \to 0$ as $n \to \infty$.
\end{proof}

\section{Example: verification of assumptions}
\label{sec:example}

Here we verify that the example~\eqref{eq:Teh} fits the assumptions of Section~\ref{sec:main},
namely \ref{a:r}, \ref{a:gamma}, \ref{a:w}, \ref{a:d} and~\eqref{eq:Ldiffgamma}.
The key statements are Proposition~\ref{prop:abc} and Corollary~\ref{cor:partialL}.

Let $\eps_* > 0$ and denote $\gamma_- = \gamma_* - 2 \eps_*$ and $\gamma_+ = \gamma_* + 2 \eps_*$,
so that for all $\eps,h$,
\[
    \gamma_- - \eps_*
    < \gamma_* + \eps \gamma_h
    < \gamma_+ + \eps_*
    .
\]
Force $\eps_*$ to be small so that $0 < \gamma_- < \gamma_+ < 1$.
Let $\gamma = \gamma_+$ and fix $r \geq 2$.

In Proposition~\ref{prop:abc} we verify assumptions~\ref{a:r}, \ref{a:gamma}, \ref{a:w}, \ref{a:d}
from Section~\ref{sec:main}, and in Corollary~\ref{cor:partialL} we verify~\eqref{eq:Ldiffgamma}.

In this section we use the notation $A \lesssim B$ for $A \leq C B$ with $C$ depending only on $\eps_*$,
and $A \sim B$ for $A \lesssim B \lesssim A$.

\begin{prop}
    \label{prop:abc}
    The family of maps $T_{\eps h}$ satisfies assumptions~\ref{a:r}, \ref{a:gamma}, \ref{a:w}, \ref{a:d}
    from Section~\ref{sec:main}.
\end{prop}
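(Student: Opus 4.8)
The plan is to reduce everything to the explicit formula. Write $\alpha = \gamma_* + \eps\gamma_h$, so that $|\alpha - \gamma_*| \le \eps_*$ (since $|\gamma_h| \le 1$), and $c_h = \int_0^1 h(s)\cos(2\pi s)\,ds$ with $|c_h| \le 1$; then, before reduction mod $1$, $T_{\eps h}(x) = x + x^{1+\alpha} + \eps c_h(x^2 - x^3)$. This is $C^\infty$ on $(0,1)$, giving \ref{a:r} for every $r$. Next $T'_{\eps h}(x) = 1 + (1+\alpha)x^\alpha + \eps c_h(2x - 3x^2)$, and from $|2x - 3x^2| \le 2x \le 2x^\alpha$ on $(0,1]$ together with $x^\alpha \ge x^{\gamma_+}$ one gets $1 + (1 - 2\eps_*)x^{\gamma_+} \le T'_{\eps h}(x) \le 3 + \eps_*$, which is \ref{a:gamma} with $\gamma = \gamma_+$, $c_\gamma = 1 - 2\eps_*$, $C_\gamma = 3 + \eps_*$. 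Since $T_{\eps h}$ is strictly increasing and runs from $0$ to $2$, it has exactly two full branches meeting at $x_0$ with $T_{\eps h}(x_0) = 1$, and $T_{\eps h}(1/3) < 1$ (for $\eps_*$ small) forces $x_0 > 1/3$ uniformly in $\eps,h$. Finally $T''_{\eps h}(x) = \alpha(1+\alpha)x^{\alpha-1} + \eps c_h(2 - 6x)$ is bounded on $[1/3,1]$ by a constant depending only on $\eps_*$, and $(T'_{\eps h})^2 \ge 1$, so \ref{a:d} holds.

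The heart of the matter is \ref{a:w}. The facts I would establish, all uniform in $\eps,h$ because $\alpha$ stays in a compact subset of $(0,1)$ and $|c_h| \le 1$, are: (i) for $2 \le m \le r+1$, $|T^{(m)}_{\eps h}(x)| \lesssim x^{1+\alpha - m}$ on $(0,1]$, since the $x^{1+\alpha}$-term contributes a multiple of $x^{1+\alpha-m}$ which dominates the bounded polynomial tail; hence, differentiating $w = 1/T'_{\eps h}$ and using $T'_{\eps h} \ge 1$, $|w^{(m)}(x)| \lesssim x^{\alpha - m}$ on $(0,1]$ for $m \ge 1$. (ii) A monomial $w_{\ell,j}$ of $(w^\ell)^{(\ell-j)}$ with $1 \le \ell - j$ (the range that enters Lemma~\ref{lem:branch}) is a constant times a product $\prod_{t=1}^\ell w^{(d_t)}$ with $\sum_t d_t = \ell - j$, so $|w_{\ell,j}(x)| \lesssim x^{\alpha - (\ell-j)}$ on $(0,1]$, whence $|w_{\ell,j}(x)|/\chi_j(x) \lesssim x^{\alpha - \ell}$ for $x$ small and $\lesssim 1$ for $x$ bounded away from $0$. (iii) The gap $G_\ell := \frac{1}{\chi_\ell\circ T_{\eps h}} - \frac{w^\ell}{\chi_\ell}$ is nonnegative and, near $0$, bounded below by a multiple of $x^{\alpha-\ell}$: here $T_{\eps h}$ is convex on $(0,1)$ because $T''_{\eps h}(x) \ge \alpha x^{\alpha-1} - 4\eps_* > 0$ once $\eps_*$ is small (depending on $\gamma_*$), so $T_{\eps h}(x) \le xT'_{\eps h}(x)$; in the regime $\chi_\ell(x) = x^\ell$, $\chi_\ell(T_{\eps h}(x)) = T_{\eps h}(x)^\ell$ this gives $G_\ell(x) = T_{\eps h}(x)^{-\ell} - (xT'_{\eps h}(x))^{-\ell} \ge \ell\,(xT'_{\eps h}(x) - T_{\eps h}(x))\,(xT'_{\eps h}(x))^{-\ell-1} \ge \ell\gamma_- C_\gamma^{-\ell-1} x^{\alpha - \ell}$, using $xT'_{\eps h}(x) - T_{\eps h}(x) = \alpha x^{1+\alpha} + \eps c_h x^2(1-2x) \ge \gamma_- x^{1+\alpha}$; in the other regimes $G_\ell \ge 0$ reduces to $\chi_\ell(x)T'_{\eps h}(x)^\ell \ge \chi_\ell(T_{\eps h}(x))$, again from $T_{\eps h}(x) \le xT'_{\eps h}(x)$ and $T'_{\eps h} \ge 1$, and for $x$ bounded away from $0$ (in particular on the second branch) one has $G_\ell \ge c_\ell > 0$ uniformly because there $w$ is bounded away from $1$.

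Putting (ii) and (iii) together: for small $x$ both $G_\ell(x)$ and $|w_{\ell,j}(x)|/\chi_j(x)$ behave like $x^{\alpha-\ell}$, so $G_\ell(x) \ge b_\ell\,|w_{\ell,j}(x)|/\chi_j(x)$ once $b_\ell$ is taken below the ratio of the implied constants; for $x$ away from $0$, $G_\ell$ is bounded below and $|w_{\ell,j}|/\chi_j$ is bounded, so a small $b_\ell$ again works; the finitely many monomials, the two branches, and the compact transition ranges where $\chi_\ell$ switches from $x^\ell$ to $\chi_*$ are all covered by the same estimates, with any fixed $\chi_* \in (0,1]$. This yields \ref{a:w} with a single choice of $b_1, \dots, b_r > 0$. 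I expect step (iii) combined with (ii) to be the main obstacle: one must ensure that the lower bound on the gap and the upper bounds on the various $|w_{\ell,j}|/\chi_j$ all scale with the same power $x^{\alpha-\ell}$ as $x \to 0$; once the powers match, everything reduces to routine estimates from the explicit form of $T_{\eps h}$.
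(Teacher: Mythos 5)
Your proposal is correct and follows essentially the same route as the paper: the paper's proof likewise dismisses \ref{a:r}, \ref{a:gamma}, \ref{a:d} as immediate and verifies \ref{a:w} by the very comparison you make, namely that $\frac{1}{\chi_\ell\circ T_{\eps h}}-\frac{w^\ell}{\chi_\ell}\sim x^{\tilde\gamma-\ell}$ while each monomial satisfies $|w_{\ell,j}|/\chi_j\lesssim x^{\tilde\gamma-\ell}$, only stated without your explicit derivative bounds and convexity estimate. Your restriction to $\ell-j\geq 1$ is consistent with how the assumption is actually used in Lemma~\ref{lem:branch} and with the paper's own (implicit) reading, since for $j=\ell$ the inequality could not hold near the indifferent fixed point.
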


\begin{proof}
    It is immediate that~\ref{a:r}, \ref{a:gamma} and~\ref{a:d} hold, so we only need to justify~\ref{a:w}.
    Denote $\tgamma = \gamma_* + \eps \gamma_h$ and observe that, with $w$ and each $w_{\ell, j}$ as in~\eqref{eq:bbb},
    \[
        \frac{1}{x^{\ell} \circ T_{\eps h}} - \frac{w^\ell}{x^\ell}
        \sim x^{\tgamma - \ell}
        \quad \text{and} \quad
        \frac{|w_{\ell,j}|}{x^j}
        \lesssim x^{\tgamma - \ell}
        .
    \]
    The implied constants depend on $\ell$ and $j$ but not on $\eps$ or $h$,
    and~\ref{a:w} follows.
\end{proof}

It remains to verify~\eqref{eq:Ldiffgamma}.
The precise expressions for $\gamma_h$ and $\varphi_h$ are not too important, we rely on the following
their properties:
\begin{itemize}
    \item $\varphi_h(0) = \varphi_h'(0) = \varphi_h(1) = 0$ for each $h$,
        so that, informally, $\varphi_h$ has no effect on the indifferent fixed point at $0$.
    \item The maps $h \mapsto \varphi_h$, $L^1 \to C^3$, $h \mapsto \varphi_h'$, $L^1 \mapsto C^2$,
        and $h \mapsto \gamma_h$, $L^1 \to \R$ are continuously 
        Fr\'echet differentiable, i.e.\ for each $h,f$,
        \begin{align*}
            \| \varphi_{h + f}  - \varphi_h  - \Phi_h  f \|_{C^3}
            & = o(\|f\|_{L^1})
            , \\
            \| \varphi'_{h + f} - \varphi'_h - \Phi'_h f \|_{C^2}
            & = o(\|f\|_{L^1})
            , \\
            | \gamma_{h + f}  - \gamma_h  - \Gamma_h  f |
            & = o(\|f\|_{L^1})
            .
        \end{align*}
        where $\Phi_h \colon L^1 \to C^3$, $\Phi'_h \colon L^1 \to C^2$ and $\Gamma \colon L^1 \to \R$
        are bounded linear operators, continuously depending on $h$.
\end{itemize}

Suppose that $f_0,f_1 \in L^1$ and $v \in \cD^2_1$
Let $f_s = (1-s) f_0 + s f_1$ with $s \in [0,1]$. Denote $T_s = T_{\eps f_s}$
and let $\cL_s$ be the associated transfer operator.

\begin{prop}
    \label{prop:partialL}
    $| \partial_s (\cL_s v) | \lesssim |\eps| x^{- (\gamma_+ - \gamma_-)}$ and
    $| (\partial_s (\cL_s v))'(x) | \lesssim |\eps| x^{- (\gamma_+ - \gamma_-) - 1}$.
\end{prop}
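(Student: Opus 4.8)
The plan is to differentiate the explicit formula for $\cL_s v$ branch by branch with respect to $s$, using that the only $s$-dependence enters through $\tgamma_s = \gamma_* + \eps \gamma_{f_s}$ and $\varphi_{f_s}$, and that $\partial_s \gamma_{f_s}$ and $\partial_s \varphi_{f_s}$ are controlled (uniformly in $s$) by the Fréchet differentiability hypotheses, with a factor $\|f_1 - f_0\|_{L^1}$ which we absorb into the $\lesssim$ since it only contributes a bounded constant (the hypothesis we are verifying, \eqref{eq:Ldiffgamma}, carries $\|h_0 - h_1\|_{L^1}$ separately). First I would fix a branch $B_s^k$ of $T_s$ with inverse branch $y_s \colon (0,1) \to B_s^k$, so that $(\cL_s v)(x) = \sum_k v(y_s^k(x)) w_s^k(x)$ with $w_s^k = 1/T_s' \circ y_s^k$, and write $\partial_s (\cL_s v) = \sum_k \bigl[ v'(y_s^k) (\partial_s y_s^k) w_s^k + v(y_s^k)(\partial_s w_s^k) \bigr]$.

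The core of the argument is then a set of pointwise estimates, separating the leftmost (intermittent) branch from the others. On the right branches everything is uniformly bounded: $y_s^k$, $w_s^k$, their $s$-derivatives and $x$-derivatives are all $O(1)$ by assumption~\ref{a:d} and the smoothness of $h \mapsto \varphi_h$, and $v, v'$ are bounded on $[C_\gamma^{-1},1]$ by Remark~\ref{rmk:xgamma} and the $\cD^2_1$ bounds; so that part of the sum is $\lesssim |\eps|$, which is dominated by $|\eps| x^{-(\gamma_+ - \gamma_-)}$. On the leftmost branch the point is that $y_s(x) \sim x$, $w_s(x) = 1 - \tgamma_s y_s^{\tgamma_s - 1}(x)(1+o(1)) \sim 1$, while $v(y) \lesssim y^{-\gamma} \sim x^{-\gamma_+}$ and $v'(y) \lesssim y^{-\gamma - 1}$ by Remark~\ref{rmk:xgamma} and $\cD^2_1$. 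The $s$-derivative of $y_s(x)$ and of $w_s(x)$ is where $\tgamma_s$ appears: differentiating the defining relation $y_s(1 + y_s^{\tgamma_s}) + \eps \varphi_{f_s}(y_s) = x$ and using $\varphi_{f_s}(y) = O(y^2)$, $\varphi_{f_s}'(y) = O(y)$ together with $\partial_s \tgamma_s = O(|\eps|)$, I would get $\partial_s y_s = O(|\eps| \, y^{1+\tgamma_s} \log(1/y)) = O(|\eps|\, x^{1 + \gamma_-})$ up to logs, and similarly $\partial_s w_s = O(|\eps| \, y^{\tgamma_s} \log(1/y))$. Multiplying out, $v'(y)\,\partial_s y_s \, w_s \lesssim |\eps| x^{-\gamma_+ - 1} x^{1+\gamma_-} \log = |\eps| x^{\gamma_- - \gamma_+}\log(1/x) \lesssim |\eps| x^{-(\gamma_+-\gamma_-)}$ (the logarithm being absorbed since the exponent $\gamma_+ - \gamma_-$ is strictly positive), and $v(y)\,\partial_s w_s \lesssim |\eps| x^{-\gamma_+} x^{\gamma_-}\log \lesssim |\eps| x^{-(\gamma_+-\gamma_-)}$; this gives the first bound. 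For the second bound I differentiate once more in $x$: each extra $\partial_x$ either lands on $v$ or $v'$ (producing one more power $y^{-1} \sim x^{-1}$, using the $\cD^2_1$ bound on $v''$ — here $v \in \cD^2_1$ is exactly what is needed), or on $y_s$, $w_s$ and their $s$-derivatives, each of which I claim costs at most a factor $x^{-1}$; chasing the same relations as before gives $\partial_x y_s \sim 1$, $\partial_x w_s = O(y^{\tgamma_s - 1}) = O(x^{\gamma_- - 1})$, $\partial_x \partial_s y_s = O(|\eps| x^{\gamma_-}\log)$, $\partial_x \partial_s w_s = O(|\eps| x^{\gamma_- - 1}\log)$, and assembling the eight resulting terms yields $\lesssim |\eps| x^{-(\gamma_+-\gamma_-)-1}$.

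The main obstacle I anticipate is the bookkeeping on the leftmost branch: one must be careful that the implicit-function estimates for $y_s(x)$ and its mixed derivatives are genuinely uniform in $\eps \in [-\eps_*,\eps_*]$, $s \in [0,1]$ and $x \in (0,1)$, and that the logarithmic factors coming from differentiating $y^{\tgamma_s}$ in $s$ (which produces $y^{\tgamma_s} \log y$) are really harmless — they are, because $\gamma_+ - \gamma_- = 4\eps_* > 0$ so $x^{\varepsilon'}\log(1/x)$ is bounded for any $\varepsilon' > 0$, but this should be stated explicitly. A secondary point to handle cleanly is that $T_s$ may have a different number of branches for different $s$ only on a measure-zero set of $s$ (the branch endpoints move continuously in $s$), so the branch-by-branch differentiation is valid for a.e.\ $s$ and the estimates, being uniform, pass to the conclusion; alternatively one notes that for $\eps_*$ small the combinatorics of the branches is locally constant in $s$. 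I would also remark that $\partial_s(\cL_s v)$ integrates to zero over $[0,1]$, since each $\cL_s v$ is a probability density — this is not needed for the two stated bounds but is used in the subsequent corollary.
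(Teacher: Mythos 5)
Your proposal follows essentially the same route as the paper: differentiate the single-branch formula for $\cL_s v$ in $s$ (the paper writes this via $\partial_s T_s$ and $\partial_s T_s'$ composed with $T_s^{-1}$, which is equivalent to your implicit differentiation of the inverse branch $y_s$ and $w_s$), bound $\partial_s T_s \lesssim |\eps| x^{1+\gamma_-}$ and $\partial_s T_s' \lesssim |\eps| x^{\gamma_-}$ using the Fr\'echet derivatives $\Gamma, \Phi, \Phi'$ and $\varphi_h(0)=\varphi_h'(0)=0$, combine with the $\cD^2_1$ bounds $|v| \lesssim x^{-\gamma_+}$, $|v'| \lesssim x^{-\gamma_+-1}$, $|v''| \lesssim x^{-\gamma_+-2}$, and differentiate once more in $x$ for the second estimate. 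One precision: the logarithm must be absorbed where it arises, i.e.\ $y^{\tgamma_s}|\log y| \lesssim y^{\gamma_-}$ using the uniform gap $\tgamma_s - \gamma_- \geq \eps_*$ (this is exactly how the paper states $|\partial_s T_s| \lesssim |\eps| x^{1+\gamma_-}$ with no log left over); your mid-proof phrase that $x^{\gamma_- - \gamma_+}\log(1/x) \lesssim x^{-(\gamma_+-\gamma_-)}$ ``since $\gamma_+-\gamma_->0$'' is literally false, although your closing paragraph identifies the correct mechanism, so this is a wording fix rather than a gap.
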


\begin{proof}
    We abuse notation, restricting to a single branch of $T_s$,
    so that $T_s$ is invertible and $\cL_s v = (v / T_s') \circ T_s^{-1}$.
    Let $\zeta_s = \Phi_{f_s} (f_1 - f_0)$, $\psi_s = \Phi'_{f_s}(f_1 - f_0)$
    and $\lambda_s = \Gamma_{f_s} (f_1 - f_0)$.
    Then
    \begin{align*}
        \partial_s (\cL_s v)
        & = \biggl[
            \frac{(v' T_s' - v T_s'') \partial_s T_s}{T_s'^3} + \frac{v \partial_s T_s'}{T_s'^2}
        \biggr] \circ T_s^{-1}
    \end{align*}
    with
    \begin{align*}
        (\partial_s T_s)(x)
        & = \eps \lambda_s x^{1 + \gamma + \eps \nu_{f_s}} \log x + \eps \zeta_s(x)
        , \\
        (\partial_s T_s')(x)
        & = \eps \lambda_s x^{\gamma + \eps \nu_{f_s}} [ 1 + (1 + \gamma + \eps \nu_{f_s}) \log x ] + \eps \psi_s(x)
        .
    \end{align*}
    Observe that $|v(x)| \lesssim x^{-\gamma_+}$, $|v'(x)| \lesssim x^{-\gamma_+ - 1}$,
    $|\partial_s T_s|  \lesssim |\eps| x^{1 + \gamma_-}$,
    $|\partial_s T_s'| \lesssim |\eps| x^{\gamma_-}$,
    $T_s'(x) \sim 1$ and $|T_s''(x)| \lesssim x^{\gamma_- - 1}$.
    Hence
    \[
        |\partial_s (\cL_s v) (x)|
        \lesssim |\eps| x^{-(\gamma_+ - \gamma_-)}
        .
    \]
    Differentiating in $x$ further and observing that $|v''(x)| \lesssim x^{-\gamma_+ - 2}$,
    $|T_s'''(x)| \lesssim x^{\gamma_- - 2}$,
    $|(\partial_s T_s)'(x)|  \lesssim |\eps| x^{\gamma_-}$ and
    $|(\partial_s T_s')'(x)| \lesssim |\eps| x^{\gamma_- - 1}$, we obtain
    \[
        |(\partial_s (\cL_s v))'(x)|
        \lesssim |\eps| x^{-(\gamma_+ - \gamma_-) - 1}
        .
    \]
\end{proof}

\begin{cor}
    \label{cor:partialL}
    In the setup of Proposition~\ref{prop:partialL}, we can represent
    \[
        \cL_{\eps f_0} v - \cL_{\eps f_1} v
        = \delta (g_0 - g_1)
        ,
    \]
    where $\delta \lesssim |\eps| \|f_0 - f_1\|_{L^1}$,
    and $g_0, g_1 \in \cD^1_1$ with $g_0(x), g_1(x) \lesssim x^{-4 \eps}$.
\end{cor}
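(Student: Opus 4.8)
The plan is to turn the pointwise derivative bounds of Proposition~\ref{prop:partialL} into the claimed decomposition by integrating in $s$, renormalising, and then adjusting the resulting functions so that they land in $\cD^1_1$. First I would write
\[
    \cL_{\eps f_1} v - \cL_{\eps f_0} v
    = \int_0^1 \partial_s (\cL_s v) \, ds
    =: \Delta,
\]
so that $\cL_{\eps f_0} v - \cL_{\eps f_1} v = -\Delta$. By Proposition~\ref{prop:partialL}, $|\Delta(x)| \lesssim |\eps| \|f_0 - f_1\|_{L^1} \, x^{-(\gamma_+ - \gamma_-)}$ and $|\Delta'(x)| \lesssim |\eps| \|f_0 - f_1\|_{L^1}\, x^{-(\gamma_+-\gamma_-)-1}$, where I have used that the implied constants in Proposition~\ref{prop:partialL} absorb the $\|f_0 - f_1\|_{L^1}$ coming from $\zeta_s, \psi_s, \lambda_s$ (these are values of bounded linear operators applied to $f_1 - f_0$). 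Since $\gamma_+ - \gamma_- = 4\eps_*$, this already gives the exponent $-4\eps$ (up to relabelling $\eps_*$ versus $\eps$; I would be slightly careful here and note $\gamma_+ - \gamma_- = 4\eps_* \geq 4|\eps|$, or simply state the bound with exponent $-4\eps_*$ which is what the abstract assumption~\eqref{eq:Ldiffgamma} needs via $\beta$).

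Next I would split $\Delta$ into positive and negative parts and regularise. The issue is that $\Delta_\pm := \max\{\pm\Delta, 0\}$ need not be smooth at the points where $\Delta$ changes sign, nor bounded below by a positive function, so they are not literally in $\cD^1_1$ (which requires strictly positive $C^0$ functions with a Lipschitz/derivative bound relative to the function itself). The standard fix: choose a fixed reference density $\rho(x) = (1-\gamma)x^{-\gamma}$, which by~\eqref{eq:xgamma} lies well inside $\cD^1_1$, and write
\[
    -\Delta = \bigl( c\rho + (-\Delta)^+ \bigr) - \bigl( c\rho + (-\Delta)^- \bigr)
\]
for a constant $c$ chosen so that both bracketed terms, after normalising to integrate to $1$, satisfy the $\cD^1$ bounds. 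Concretely, I would set $\delta = \|c\rho + (-\Delta)^+\|_{L^1} = \|c\rho + (-\Delta)^-\|_{L^1}$ (these are equal because $\int \Delta = 0$, as both $\cL_{\eps f_i} v$ are probability densities — here I use that $v \in \cD^2_1$ so $\int v = 1$ and each $\cL_{\eps f_i}$ preserves integral), and $g_i = \delta^{-1}(c\rho + (-\Delta)^{\pm})$. Then $\delta \leq c + C|\eps|\|f_0-f_1\|_{L^1} \lesssim |\eps|\|f_0 - f_1\|_{L^1}$ provided $c$ is taken proportional to $|\eps|\|f_0-f_1\|_{L^1}$; this is the point where one must check $c$ can be chosen that small while still dominating $\Delta$ enough to enforce positivity and the ratio bounds.

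The main obstacle is verifying $g_0, g_1 \in \cD^1_1$, i.e.\ the bound $\Lip_{g_i}/g_i \leq a_1/\chi_1$ together with $\int_0^x g_i \leq A x^{1-\gamma}$. For the Lipschitz-ratio bound: since $c\rho$ contributes $|(c\rho)'(x)| = c\gamma(1-\gamma)x^{-\gamma-1}$ and $g_i \geq \delta^{-1} c\rho \gtrsim \delta^{-1} c x^{-\gamma}$, the $\rho$-part alone gives ratio $\lesssim x^{-1} = \chi_1^{-1}$ near $0$; the $(-\Delta)^\pm$ part has $|\Delta'|/\Delta^\pm$ not controlled pointwise, but $|\Delta'(x)| \lesssim |\eps|\|f_0-f_1\|_{L^1} x^{-4\eps_* - 1}$ and $g_i \gtrsim \delta^{-1} c x^{-\gamma} \gtrsim \delta^{-1} |\eps|\|f_0-f_1\|_{L^1} x^{-\gamma}$, so the ratio is $\lesssim x^{-4\eps_* - 1 + \gamma}/x^{-\gamma}$... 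I would need $\gamma \geq 4\eps_*$-type slack, which holds since $\gamma = \gamma_+ = \gamma_* + 2\eps_*$ and $\eps_*$ is small — so up to choosing $c$ large enough (but still $\lesssim |\eps|\|f_0-f_1\|_{L^1}$, with the implied constant depending on $a_1$), this works; one plays $a_1$ against the implied constants exactly as in Lemma~\ref{lem:branch}. For the integral bound $\int_0^x g_i \leq A x^{1-\gamma}$: the $\rho$-part gives exactly $\int_0^x \delta^{-1} c\rho = \delta^{-1} c\, x^{1-\gamma}$, and since $\delta \geq c\|\rho\|_{L^1}^{-1}\cdot(\text{something}) $... actually $\delta \geq c$, so $\delta^{-1}c \leq 1$; the $(-\Delta)^\pm$-part contributes $\int_0^x |\Delta| \lesssim |\eps|\|f_0-f_1\|_{L^1} x^{1-4\eps_*} \leq \delta \cdot (\text{const})\, x^{1-4\eps_*}$, and since $1 - 4\eps_* \geq 1 - \gamma$ fails in general — wait, $1-\gamma = 1-\gamma_*-2\eps_*$ and $1 - 4\eps_* > 1-\gamma$ iff $\gamma > 4\eps_*$ iff $\gamma_* > 2\eps_*$, true for small $\eps_*$ — so $x^{1-4\eps_*} \leq x^{1-\gamma}$ for $x \in (0,1]$, giving the bound with $A$ depending only on the fixed constants. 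I would present these as routine checks, emphasising that the only real content is the choice of $c \asymp |\eps|\|f_0-f_1\|_{L^1}$ with constant depending on $\bfC$, which makes $\delta \lesssim |\eps|\|f_0-f_1\|_{L^1}$ and simultaneously forces positivity and the $\cD^1_1$ bounds.
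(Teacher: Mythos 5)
Your overall strategy (write $\cL_{\eps f_1} v - \cL_{\eps f_0} v = \int_0^1 \partial_s(\cL_s v)\,ds$, note that the bounds of Proposition~\ref{prop:partialL} are linear in $\|f_0-f_1\|_{L^1}$, then split into positive and negative parts padded by a smooth reference density and renormalise) is the intended route; the paper gives no written proof of the corollary precisely because this is regarded as routine. However, your specific choice of reference density breaks the statement you are trying to prove. You take $\rho(x)=(1-\gamma)x^{-\gamma}$ and set $g_i=\delta^{-1}\bigl(c\rho+(-\Delta)^{\pm}\bigr)$ with $c\asymp\delta\asymp|\eps|\|f_0-f_1\|_{L^1}$. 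Then $g_i(x)\gtrsim \delta^{-1}c\,x^{-\gamma}\sim x^{-\gamma}$, and since $\gamma=\gamma_*+2\eps_*$ is much larger than $4\eps_*$, this is incompatible with the required bound $g_0(x),g_1(x)\lesssim x^{-4\eps}$. This is not a cosmetic defect: the exponent is exactly what feeds into assumption~\eqref{eq:Ldiffgamma} as $\beta$, and the whole scheme needs $\beta<\min\{\gamma,1-\gamma\}$ so that $-1/\gamma+\beta/\gamma<-1$ (summability in Proposition~\ref{prop:physical}) and so that Lemma~\ref{lem:conv} applies. With your padding the decomposition only delivers $\beta=\gamma$, which fails the strict inequality $\beta<\gamma$ and, whenever $\gamma\geq 1/2$, also fails $\beta<1-\gamma$, so the improved rate of Theorem~\ref{thm:Juho} that the argument relies on is lost.

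The repair is simple and keeps the rest of your computation intact: pad with the mild density $\rho_\beta(x)=(1-4\eps_*)x^{-4\eps_*}$ instead of $x^{-\gamma}$. One checks directly that $\rho_\beta\in\cD^1_1$ (its ratio $|\rho_\beta'|/\rho_\beta=4\eps_*/x$ is admissible since $a_1$ may be taken large, and $\int_0^x\rho_\beta = x^{1-4\eps_*}\leq x^{1-\gamma}\leq A x^{1-\gamma}$ because $4\eps_*\leq\gamma$). Then with $c=K|\eps|\|f_0-f_1\|_{L^1}$ for a suitable constant $K$ depending only on $\bfC$, the lower bound $g_i\geq\delta^{-1}c\rho_\beta\gtrsim x^{-4\eps_*}$ matches the singularity $|\Delta'(x)|\lesssim|\eps|\|f_0-f_1\|_{L^1}x^{-4\eps_*-1}$, so the Lipschitz-ratio bound $\Lip_{g_i}/g_i\leq a_1/\chi_1$ goes through exactly as in your sketch, the integral bound follows since $1-4\eps_*\geq 1-\gamma$, and now $g_i(x)\lesssim x^{-4\eps_*}$ as claimed, with $\delta=c+\int(-\Delta)^{\pm}\lesssim|\eps|\|f_0-f_1\|_{L^1}$ unchanged. (Two minor points you should also make explicit: the equality $\int(-\Delta)^+=\int(-\Delta)^-$ uses that both $\cL_{\eps f_i}v$ are probability densities, which you did note; and Proposition~\ref{prop:partialL} is proved branchwise, so its bounds must be summed over the finitely many branches before integrating in $s$.)
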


\end{document}